\theoremstyle{plain} 
\newtheorem{theorem}{Theorem} 
\newtheorem{lemma}[theorem]{Lemma}
\theoremstyle{definition} 
\newtheorem{problem}{Problem}
\theoremstyle{remark} 
\newtheorem*{claim}{Claim}
\DeclareMathOperator{\Arg}{Arg} 
\DeclareMathOperator{\dist}{dist} 
\DeclareMathOperator*{\esssup}{ess\,sup} 
\newcommand{\pv}{\mathrm{p.v.}}
\begin{document} 
\title{Maximal norm Hankel operators} 
\date{\today} 

\author{Ole Fredrik Brevig} 
\address{Department of Mathematics, University of Oslo, 0851 Oslo, Norway} 
\email{obrevig@math.uio.no}

\author{Kristian Seip} 
\address{Department of Mathematical Sciences, Norwegian University of Science and Technology (NTNU), 7491 Trondheim, Norway} 
\email{kristian.seip@ntnu.no}

\begin{abstract}
	A Hankel operator $\mathbf{H}_\varphi$ on the Hardy space $H^2$ of the unit circle with analytic symbol $\varphi$ has minimal norm if $\|\mathbf{H}_\varphi\|=\|\varphi \|_2$ and maximal norm if $\|\mathbf{H}_\varphi\| = \|\varphi\|_\infty$. The Hankel operator $\mathbf{H}_\varphi$ has both minimal and maximal norm if and only if $|\varphi|$ is constant almost everywhere on the unit circle or, equivalently, if and only if $\varphi$ is a constant multiple of an inner function. We show that if $\mathbf{H}_\varphi$ is norm-attaining and has maximal norm, then $\mathbf{H}_\varphi$ has minimal norm. If $|\varphi|$ is continuous but not constant, then $\mathbf{H}_\varphi$ has maximal norm if and only if the set at which $|\varphi|=\|\varphi\|_{\infty}$ has nonempty intersection with the spectrum of the inner factor of $\varphi$. We obtain further results illustrating that the case of maximal norm is in general related to ``irregular'' behavior of $\log |\varphi|$ or the argument of $\varphi$ near a ``maximum point'' of $|\varphi|$. The role of certain positive functions coined apical Helson--Szeg\H{o} weights is discussed in the former context. 
\end{abstract}

\subjclass[2020]{Primary 47B35. Secondary 30H10}

\thanks{The research of Seip is supported by Grant 275113 of the Research Council of Norway.}

\maketitle

\section{Introduction} This paper along with its antecedent \cite{Brevig22} grew out of a desire to understand the cases of equality in the most basic norm estimates for Hankel operators in the classical setting of the unit circle $\mathbb{T}$ of the complex plane. We equip as usual $\mathbb{T}$ with normalized Lebesgue arc length measure and define the Hardy space $H^p$ for $1\leq p\leq \infty$ as the subspace of $L^p = L^p(\mathbb{T})$ consisting of the functions whose Fourier coefficients are supported on $\{0,1,2,\ldots\}$. To define Hankel operators, we let $\overline{H^2}$ denote the subspace of $L^2$ consisting of the complex conjugates of functions in $H^2$, and we let $P$ and $\overline{P}$ stand for the orthogonal projections from $L^2$ to respectively $H^2$ and $\overline{H^2}$. Every \emph{symbol} $\varphi$ in $H^2$ induces a (densely defined) Hankel operator 
\begin{equation}\label{eq:hankel} 
	\mathbf{H}_\varphi f \coloneqq \overline{P}(\overline{\varphi} f) 
\end{equation}
from $H^2$ to $\overline{H^2}$. We have plainly 
\begin{equation}\label{eq:hankelnorm} 
	\|\varphi\|_2 \leq \|\mathbf{H}_\varphi\| \leq \|\varphi\|_\infty, 
\end{equation}
and our problem is to identify, in terms of function theoretic properties of $\varphi$, the cases of equality in either of these norm estimates. The Hankel operator $\mathbf{H}_\varphi$ is said to have \emph{minimal norm} if the lower bound in \eqref{eq:hankelnorm} is attained and \emph{maximal norm} if the upper bound in \eqref{eq:hankelnorm} is attained. 

A function $I$ in $H^2$ is \emph{inner} if $|I|=1$ almost everywhere on $\mathbb{T}$. It is plain that $\mathbf{H}_\varphi$ has both minimal and maximal norm if and only if $\|\varphi\|_2 = \|\varphi\|_\infty$. The latter equality is equivalent to the assertion that $|\varphi|$ is constant almost everywhere on $\mathbb{T}$ or, equivalently, that $\varphi = CI$ for a constant $C$ and an inner function $I$. Minimal norm Hankel operators were recently investigated by the first-named author who established the following result. 
\begin{theorem}[{\cite[Thm.~1]{Brevig22}}]
	If $\mathbf{H}_\varphi$ has minimal norm, then $\mathbf{H}_\varphi$ has maximal norm. 
\end{theorem}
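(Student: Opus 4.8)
\emph{Proof proposal.} The plan is to exploit that, under the hypothesis, the constant function $1$ is a norm-attaining vector for $\mathbf{H}_\varphi$, and to turn this into an identity for the Fourier coefficients of $|\varphi|^2$. First I would record the elementary computation $\mathbf{H}_\varphi 1 = \overline{P}(\overline{\varphi}) = \overline{\varphi}$, which holds because $\overline{\varphi}$ already belongs to $\overline{H^2}$; consequently $\|\mathbf{H}_\varphi 1\|_2 = \|\varphi\|_2$ while $\|1\|_2 = 1$. Since $\varphi \in H^2$, the minimal norm hypothesis $\|\mathbf{H}_\varphi\| = \|\varphi\|_2$ makes $\mathbf{H}_\varphi$ bounded, and it says precisely that the unit vector $1$ maximizes $\|\mathbf{H}_\varphi f\|_2/\|f\|_2$.

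Next I would extract the Euler--Lagrange condition attached to this maximality. For a fixed polynomial $f$, the quadratic $t \mapsto \|\mathbf{H}_\varphi(1+tf)\|_2^2 - \|\varphi\|_2^2\,\|1+tf\|_2^2$ is $\le 0$ for all real $t$ (the operator norm bound) and vanishes at $t=0$, so its linear coefficient vanishes; running the argument with $f$ and with $if$ then yields $\langle \mathbf{H}_\varphi 1,\, \mathbf{H}_\varphi f\rangle = \|\varphi\|_2^2\,\langle 1, f\rangle$ for every polynomial $f$. (Equivalently, one invokes that a maximizing unit vector $g$ of a bounded operator $T$ satisfies $T^{\ast}Tg = \|T\|^2 g$, here giving $\mathbf{H}_\varphi^{\ast}\mathbf{H}_\varphi 1 = \|\varphi\|_2^2\,1$.)

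Then it remains to compute both sides explicitly. Writing $\varphi = \sum_{n\ge 0} c_n z^n$, a short calculation with \eqref{eq:hankel} gives $\mathbf{H}_\varphi z^j = \sum_{l \ge 0} \overline{c_{j+l}}\,\overline{z}^{\,l}$ for $j \ge 0$, whence
\[
 \langle \mathbf{H}_\varphi 1,\, \mathbf{H}_\varphi z^j\rangle = \Big\langle \overline{\varphi},\ \textstyle\sum_{l\ge0}\overline{c_{j+l}}\,\overline{z}^{\,l}\Big\rangle = \sum_{l \ge 0} c_{j+l}\,\overline{c_l} = \widehat{|\varphi|^2}(j).
\]
Taking $f = z^j$ in the identity from the previous paragraph therefore yields $\widehat{|\varphi|^2}(j) = \|\varphi\|_2^2\,\delta_{j,0}$ for all $j \ge 0$. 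Since $|\varphi|^2$ is real-valued, its coefficients at negative indices are the complex conjugates of these, so $|\varphi|^2 = \|\varphi\|_2^2$ almost everywhere on $\mathbb{T}$. Hence $\|\varphi\|_\infty = \|\varphi\|_2 = \|\mathbf{H}_\varphi\|$, that is, $\mathbf{H}_\varphi$ has maximal norm.

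The argument is short, and there is no serious obstacle: the only points needing care are the observations that the hypothesis already forces $\mathbf{H}_\varphi$ to be bounded and that the constant function $1$ genuinely lies in the dense domain on which $\mathbf{H}_\varphi$ acts by \eqref{eq:hankel}, so that the variational (or adjoint) manipulation in the second step is legitimate.
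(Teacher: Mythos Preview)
The paper does not actually contain a proof of this theorem; it is quoted verbatim from the companion paper \cite{Brevig22} and only cited here, so there is no ``paper's own proof'' to compare against.

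That said, your argument is correct and self-contained. The observation $\mathbf{H}_\varphi 1=\overline{P}(\overline{\varphi})=\overline{\varphi}$ indeed exhibits the constant function as a norm-attaining unit vector under the minimal-norm hypothesis, and the standard eigenvalue fact $\mathbf{H}_\varphi^{\ast}\mathbf{H}_\varphi 1=\|\varphi\|_2^2\cdot 1$ for a maximizing vector, combined with your Fourier computation
\[
\langle \mathbf{H}_\varphi 1,\mathbf{H}_\varphi z^j\rangle=\langle \overline{\varphi},\overline{\varphi}z^j\rangle=\widehat{|\varphi|^2}(j),
\]
cleanly forces all nonzero Fourier coefficients of $|\varphi|^2$ to vanish. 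You thereby obtain the slightly stronger (but equivalent) conclusion already noted in the introduction, namely that $|\varphi|$ is constant almost everywhere, i.e.\ $\varphi=CI$ for a constant $C$ and an inner function $I$. It is worth remarking that your proof is in the same spirit as the paper's proof of Theorem~\ref{thm:NA} (the partial converse): both extract information from a norm-attaining vector, there via the pairing $\langle g\,\overline{\mathbf{H}_\varphi g},\varphi\rangle$ and the equality case in H\"older's inequality, here via the eigenvector identity for $\mathbf{H}_\varphi^{\ast}\mathbf{H}_\varphi$ applied to the explicit maximizer $g=1$.
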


We will say that the Hankel operator $\mathbf{H}_\varphi$ is \emph{norm-attaining} if there is a function $g$ in the unit sphere of $H^2$ such that $\|\mathbf{H}_\varphi\|=\|\mathbf{H}_\varphi g\|_2$. To place our results in context, we begin with the following observation. The proof is similar to that of a result of Adamjan, Arov, and Kre\u{\i}n (see \cite{AAK68} or e.g. \cite[Thm.~1.4]{Peller03}).
\begin{theorem}\label{thm:NA} 
	If $\mathbf{H}_\varphi$ is norm-attaining and has maximal norm, then $\mathbf{H}_\varphi$ has minimal norm. 
\end{theorem}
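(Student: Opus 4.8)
The plan is to exploit the hypothesis that the norm is attained by some $g$ in the unit sphere of $H^2$ together with the equality $\|\mathbf{H}_\varphi\| = \|\varphi\|_\infty$, and to conclude that $|\varphi|$ must be constant almost everywhere, which is exactly the statement that $\mathbf{H}_\varphi$ has minimal norm. First I would set $\lambda \coloneqq \|\mathbf{H}_\varphi\| = \|\varphi\|_\infty$ and let $g$ be a unit vector in $H^2$ with $\|\mathbf{H}_\varphi g\|_2 = \lambda$. Then I would write out the chain of inequalities that $\mathbf{H}_\varphi$ being an operator of norm $\lambda$ forces to be equalities: since $\mathbf{H}_\varphi g = \overline{P}(\overline{\varphi}g)$ and $\overline{P}$ is a contraction,
\begin{equation*}
	\lambda = \|\overline{P}(\overline{\varphi} g)\|_2 \leq \|\overline{\varphi} g\|_2 = \Big( \int_{\mathbb{T}} |\varphi|^2 |g|^2 \, dm \Big)^{1/2} \leq \|\varphi\|_\infty \, \|g\|_2 = \lambda.
\end{equation*}
Hence both inequalities are in fact equalities. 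From the second one I would conclude that $|\varphi|^2 = \lambda^2$ almost everywhere on the set $\{g \neq 0\}$ (otherwise the integral would be strictly smaller than $\lambda^2$), and from the first that $\overline{\varphi} g \in \overline{H^2}$, i.e.\ $P(\overline{\varphi} g) = 0$.

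The crux — and the step I expect to be the main obstacle — is to upgrade ``$|\varphi| = \lambda$ a.e.\ on $\{g \neq 0\}$'' to ``$|\varphi| = \lambda$ a.e.\ on all of $\mathbb{T}$''. This is where the AAK-type argument enters. The key identity is $P(\overline{\varphi} g) = 0$, which says that $\overline{\varphi} g = \overline{h}$ for some $h \in H^2$ with $h(0) \geq 0$, and moreover $\|h\|_2 = \|\overline{\varphi}g\|_2 = \lambda$. Thus on $\mathbb{T}$ we have the pointwise relation $\overline{\varphi} g = \overline{h}$, equivalently $\varphi \overline{g} = h$, so that $|\varphi| |g| = |h|$ almost everywhere. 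Now I would argue that $g$ cannot vanish on a set of positive measure: if $g$ vanished on a set $E$ of positive measure, then $h = \varphi \overline{g}$ also vanishes on $E$, but a nonzero $H^2$ function cannot vanish on a set of positive measure (by the F.~and M.~Riesz theorem, or the uniqueness theorem for Hardy spaces). Since $\|h\|_2 = \lambda > 0$ (as $\lambda = \|\mathbf{H}_\varphi\| \geq \|\varphi\|_2 > 0$ unless $\varphi \equiv 0$, a trivial case), $h$ is not identically zero, so $g \neq 0$ almost everywhere on $\mathbb{T}$. Combining this with the earlier conclusion gives $|\varphi| = \lambda$ almost everywhere on $\mathbb{T}$.

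Finally I would record the consequence: $|\varphi| = \lambda = \|\varphi\|_\infty$ almost everywhere means $\|\varphi\|_2 = \|\varphi\|_\infty = \lambda = \|\mathbf{H}_\varphi\|$, so the lower bound in \eqref{eq:hankelnorm} is attained and $\mathbf{H}_\varphi$ has minimal norm, as claimed. I should also dispatch the degenerate case $\varphi \equiv 0$ separately at the outset, where all three quantities in \eqref{eq:hankelnorm} vanish and the statement holds trivially. The only delicate point to double-check is the normalization/measurability bookkeeping in passing from $P(\overline{\varphi}g) = 0$ to the factorization $\overline{\varphi}g = \overline{h}$ with $h \in H^2$: this is immediate since $\overline{\varphi}g \in L^2$ has vanishing analytic projection, hence lies in $\overline{H^2}$, hence equals $\overline{h}$ for a unique $h \in H^2$; and $\|h\|_2 = \|\overline{h}\|_2 = \|\overline{\varphi}g\|_2$.
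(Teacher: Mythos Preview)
Your argument is correct and close in spirit to the paper's, though the packaging differs. The paper forms the $H^1$ function $f = \|\varphi\|_\infty^{-1}\, g\,\overline{\mathbf{H}_\varphi g}$, computes $\|\varphi\|_\infty = \langle f,\varphi\rangle \leq \|f\|_1 \|\varphi\|_\infty$, and reads off $|\varphi| = \|\varphi\|_\infty$ a.e.\ from equality in H\"older together with the fact that a nontrivial $H^1$ function is nonzero a.e. You instead split the upper bound into the projection step $\|\overline{P}(\overline{\varphi}g)\|_2 \leq \|\overline{\varphi}g\|_2$ and the pointwise step $\|\overline{\varphi}g\|_2 \leq \|\varphi\|_\infty\|g\|_2$; equality in the second already gives $|\varphi| = \lambda$ on $\{g\neq 0\}$, and the same Hardy-space uniqueness principle finishes. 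Your route is in fact slightly more direct than you make it: the detour through $h$ is unnecessary, since $g$ itself is a nonzero $H^2$ function and hence nonzero a.e., so the first equality and the factorization $\overline{\varphi}g=\overline{h}$ can be dropped entirely. (A minor quibble along the way: $\overline{\varphi}g\in\overline{H^2}$ is not literally equivalent to $P(\overline{\varphi}g)=0$, since constants lie in both $H^2$ and $\overline{H^2}$; but once the detour is removed this plays no role.)
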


Since compact operators are norm-attaining, it follows from Hartman's theorem~\cite{Hartman58} and a result of Sarason \cite[Thm.~1~(iv)]{Sarason75} that if the symbol $\varphi$ has vanishing mean oscillation, then $\mathbf{H}_\varphi$ is norm-attaining. In particular, if $\varphi$ is continuous on $\mathbb{T}$ and $\mathbf{H}_\varphi$ has maximal norm, then $\varphi = CB$ for a constant $C$ and a finite Blaschke product $B$. If $\varphi = CI$ for a constant $C$ and an inner function $I$, then $\mathbf{H}_\varphi$ is plainly norm-attaining with $g = I$. We refer to \cite{Yoshino02} for a description of the symbols generating norm-attaining Hankel operators. 

In view of this preliminary discussion, it remains to identify those maximal norm Hankel operators that are not minimal norm Hankel operators. The main purpose of this paper is to initiate the study of such operators.

We begin by recasting our problem in function-theoretic terms via Nehari's theorem \cite{Nehari57}, which states that 
\begin{equation}\label{eq:nehari} 
	\|\mathbf{H}_\varphi\| = \inf_{\psi \in L^\infty}\big\{\|\psi\|_\infty \,:\, P\psi = \varphi \big\} 
\end{equation}
for every symbol $\varphi$ in $H^2$. Let $H^\infty_0$ denote the subspace of $H^\infty$ of functions vanishing at the origin. It follows from \eqref{eq:nehari} that $\mathbf{H}_\varphi$ has maximal norm if and only if 
\begin{equation}\label{eq:badapp} 
	\|\varphi\|_\infty = \inf_{f \in H^\infty_0} \|\varphi - \overline{f}\|_\infty. 
\end{equation}
We will say that $\varphi$ is \emph{saturated} if \eqref{eq:badapp} holds. Some terminology is required to state our results. The \emph{super-level sets} and \emph{sub-level sets} of a bounded function $\varphi$ are defined, respectively, as 
\begin{align*}
	L_\varphi^+(\delta) \coloneqq \left\{e^{i\theta} \in \mathbb{T} \,:\, |\varphi(e^{i\theta})| \geq \|\varphi\|_\infty \delta \right\}, \\
	L_\varphi^-(\delta) \coloneqq \left\{e^{i\theta} \in \mathbb{T} \,:\, |\varphi(e^{i\theta})| < \|\varphi\|_\infty \delta \right\}, 
\end{align*}
where $0 \leq \delta \leq 1$. As usual, we factor $\varphi$ as $\varphi = \Phi I$, where $I$ is an inner function and $\Phi$ is an outer function determined by $|\varphi|$ on $\mathbb{T}$ (see \eqref{eq:outer} below). Note that $L_\varphi^\pm = L_\Phi^\pm$, since $|I|=1$ almost everywhere on $\mathbb{T}$. Recall that an inner function $I$ may be written as $I = BS$, where $B$ is a Blaschke product and $S$ is a singular inner function. The \emph{spectrum} of $I$, denoted by $\sigma(I)$ in what follows, is the subset of $\mathbb{T}$ defined as the union of the accumulation points of the zeros of $B$ and the support of the singular measure associated to $S$.

So far, we have seen that functions in $H^\infty$ of constant modulus are always saturated and that functions in $H^\infty$ that have vanishing mean oscillation on $\mathbb{T}$, are saturated if and only if they have constant modulus. By a theorem of Sarason \cite[Thm.~3]{Sarason75}, a simple criterion for $\varphi$ to have vanishing mean oscillation is that $|\varphi|$ be continuous on the closed unit disc. Note in particular that in this case $\sigma(I)=\emptyset$. Our second result is the following complete description of saturated functions in $H^\infty$ that have continuous but not constant modulus on $\mathbb{T}$. 
\begin{theorem}\label{thm:cont} 
	Suppose $\varphi=\Phi I$ is a function in $H^\infty$ such that $|\varphi|$ is continuous but not constant on $\mathbb{T}$. Then $\varphi$ is saturated if and only if $L_\Phi^+(1)\cap \sigma(I) \neq \emptyset$. 
\end{theorem}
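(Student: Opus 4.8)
The plan is to prove both directions via the reformulation \eqref{eq:badapp}, namely that $\varphi$ is saturated precisely when $\|\varphi\|_\infty = \inf_{f\in H^\infty_0}\|\varphi-\overline f\|_\infty$. Throughout I normalize $\|\varphi\|_\infty = 1$, and I let $E \coloneqq L_\Phi^+(1) = \{e^{i\theta} : |\varphi(e^{i\theta})| = 1\}$, which is a nonempty \emph{closed} subset of $\mathbb{T}$ by continuity of $|\varphi|$ (nonempty since $|\varphi|$ is continuous and hence attains its maximum); by hypothesis $E \neq \mathbb{T}$. Since $|I|=1$ a.e., the key point is that for every $e^{i\theta}\in E$ we have $|\varphi(e^{i\theta})| = |\Phi(e^{i\theta})| = 1$, so $\Phi$ too is peaking on $E$.

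For the \textbf{sufficiency} direction, suppose $\zeta \in E \cap \sigma(I)$. I must show that $\|\varphi-\overline f\|_\infty \geq 1$ for every $f\in H^\infty_0$. Fix such an $f$ and suppose for contradiction that $\|\varphi - \overline f\|_\infty = 1-2\varepsilon < 1$. Since $\Phi$ is outer with $|\Phi| = |\varphi|$ continuous and equal to $1$ at $\zeta$, and $\zeta$ lies in the spectrum of the inner factor $I$, the idea is that $\varphi/\Phi = I$ is badly behaved near $\zeta$ in a way that forces $\varphi$ and any $\overline f \in \overline{H^\infty_0}$ to separate. Concretely, consider the function $\varphi - \overline f$: on a small arc $J$ around $\zeta$, $|\varphi|$ is close to $1$, so $\overline f$ must be close to $\varphi = \Phi I$ there, and in particular $|\overline f|$ is bounded below near $1$ on $J$, forcing $\overline f(\zeta) \neq 0$ in a suitable nontangential sense. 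But then $f/ (\text{something})$ would be an $H^\infty$ function whose boundary values match those of an inner function $I$ on a neighborhood of a point of $\sigma(I)$, up to an outer factor, and this contradicts the defining property of the spectrum: $I$ extends analytically across every arc of $\mathbb{T}\setminus\sigma(I)$ but \emph{not} across $\zeta$. The cleanest route is probably to use a normal-families / dual-extremal argument: by the duality between \eqref{eq:badapp} and the approximation problem, $\varphi$ fails to be saturated iff there is a best approximation $\overline f_0$ with $\|\varphi - \overline f_0\|_\infty < 1$, and then $\varphi - \overline f_0$ is an $H^\infty$ (after multiplying by $z$) function bounded away from $\mathbb{T}$ near $\zeta$ on the set $E$; combine this with $|\varphi| \to 1$ at $\zeta$ along $E$ to deduce that $\overline f_0$, hence $I = \varphi/\Phi$, has a limit at $\zeta$ with modulus $1$, contradicting $\zeta \in \sigma(I)$ via the standard fact that an inner function cannot have a unimodular nontangential limit at a point of its spectrum together with the boundary behavior forced by $\Phi$ outer and continuous.

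For the \textbf{necessity} direction, suppose $E \cap \sigma(I) = \emptyset$; I must exhibit $f\in H^\infty_0$ with $\|\varphi - \overline f\|_\infty < 1$, showing $\varphi$ is not saturated. Since $E$ and $\sigma(I)$ are disjoint closed sets, $I$ extends analytically across an open neighborhood $U$ of $E$ in $\mathbb{T}$ (as $\mathbb{T}\setminus\sigma(I)$ contains $E$), and on $U$ we have $|I| = 1$ with $I$ real-analytic. On $E$ itself $|\varphi| = |\Phi| = 1$. The strategy is to build $\overline f$ to cancel $\varphi$ effectively near $E$: since $\varphi = \Phi I$ and near $E$ the inner factor $I$ is nice while $|\Phi| \to 1$, one wants $\overline f \approx \Phi I$ near $E$ while keeping $\|\overline f\|$ and the error under control away from $E$ where already $|\varphi| \leq 1-\eta$ for some $\eta>0$ (by continuity and compactness of $\mathbb{T}\setminus U$). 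One natural construction: since $I$ is analytic across $E$ and unimodular there, write $\varphi = \Phi I$ and approximate; e.g. take $f = z(\varphi - c)$ type corrections, or more robustly use the fact that $\overline{\varphi}$ restricted near $E$ agrees with $\overline{\Phi I}$ which, because $I$ is analytic and nonvanishing there, can be matched by boundary values of an $H^\infty_0$ function up to small error — then on $E$, $|\varphi - \overline f|$ can be made $< 1$ while on the complement continuity keeps it $<1$ too. This is essentially a quantitative version of the Rudin–Carleson / peak-interpolation circle of ideas adapted to the operator-norm setting, and I expect this construction to be the \emph{main obstacle}: one must simultaneously (i) beat the value $1$ on the peak set $E$, where $\varphi$ itself has modulus exactly $1$, and (ii) not overshoot on $\mathbb{T}\setminus E$. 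The leverage for (i) is precisely the regularity of $I$ near $E$ guaranteed by $E\cap\sigma(I)=\emptyset$: because $I$ is analytic and unimodular in a neighborhood, $\Phi I$ can be corrected by an analytic-conjugate term that rotates the value off the unit circle inward, whereas without this regularity no such correction exists — which is exactly the dichotomy the theorem asserts.
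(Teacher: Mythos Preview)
Your proposal is a sketch of intuitions rather than a proof, and in both directions the decisive technical step is either missing or misidentified.

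\textbf{Sufficiency.} You correctly sense that if $\|\varphi-\overline f\|_\infty<1$ and $|\varphi|\approx 1$ near $\zeta\in E\cap\sigma(I)$, then $\overline f$ must track $\varphi$ closely there, and this should conflict with the singularity of $I$ at $\zeta$. But the contradiction you invoke---``an inner function cannot have a unimodular nontangential limit at a point of its spectrum''---is not correct (a Blaschke product can have a unimodular angular limit, even an angular derivative, at an accumulation point of its zeros), and in any case your deduction that ``$\overline{f_0}$, hence $I=\varphi/\Phi$, has a limit at $\zeta$'' does not follow: closeness of $\overline f$ to $\varphi$ on the boundary near $\zeta$ gives no nontangential information about $I$. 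The spectrum is characterized by failure of \emph{analytic continuation across an arc}, and that is what one must contradict. The paper (Theorem~\ref{thm:badapp}) does exactly this: from $\|\varphi-\overline f\|_\infty<\|\varphi\|_\infty$ it extracts $|\Arg(\varphi f)|<\pi/2$ on an open arc $\Gamma\subset L_\Phi^+(\delta)$ meeting $\sigma(I)$, builds $g=\exp(-\tilde\nu+i\nu)\in H^1$ so that $h=\varphi f g$ is \emph{positive} on $\Gamma$, and then uses reflection to conclude that $h$---hence its inner factor, hence $I$---continues analytically across $\Gamma$. Nothing in your outline reaches that mechanism.

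\textbf{Necessity.} Here you explicitly flag the construction of $f$ as ``the main obstacle'' and leave it undone. Gestures toward Rudin--Carleson interpolation do not suffice: the task is not to interpolate on the closed peak set $E$ but to achieve $|\varphi-\overline f|<1$ \emph{everywhere}, including on the open set $L_\Phi^+(\delta)\supsetneq E$ where $|\varphi|$ is merely close to $1$. The paper's construction (Theorem~\ref{thm:HS} combined with Lemma~\ref{lem:HS}) is explicit: one takes $f=\varepsilon z g_1 g_2$, where $g_1=\exp(-\tilde\nu+i\nu)$ is built from a smooth extension of $-\arg(zI)$ off a finite union of arcs $\Gamma\supset\sigma(I)$ contained in $L_\Phi^-(\delta)$, and $g_2=\exp(-u-i\tilde u)$ comes from writing $|\Phi|=e^{u+v}$ with $u\in L^\infty$ and $\esssup_{L_\Phi^+(\delta)}|\tilde v|<\pi/2$ (this apical Helson--Szeg\H{o} decomposition is automatic here by continuity of $|\Phi|$, via Lemma~\ref{lem:HS}). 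The two factors align the argument of $\overline f$ with that of $\varphi$ on $L_\Phi^+(\delta)$, so a small $\varepsilon>0$ strictly decreases $|\varphi-\overline f|$ there; on $L_\Phi^-(\delta)$ one already has $|\varphi|\le\delta<1$ and the small perturbation is harmless. This structure---separating the argument correction for $I$ from the argument correction for $\Phi$, and identifying the Helson--Szeg\H{o}-type condition that makes the latter possible---is the content of the proof, and it is absent from your proposal.
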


It is not difficult to construct examples of saturated functions in $H^\infty$ that are not of the form $\varphi = CI$ from Theorem~\ref{thm:cont}. One of the simplest possibilities is $\varphi = \Phi I$, for
\[\Phi(z) = 1+z \qquad \text{and} \qquad I(z) = \exp\left(\frac{z+1}{z-1}\right).\]
Here it is plain that $L_\Phi^+(1)\cap \sigma(I) = \{1\}$, so it follows from Theorem~\ref{thm:cont} that $\|H_\varphi\| = \|\varphi\|_\infty = \|\Phi\|_\infty = 2$. 

Theorem~\ref{thm:cont} is a consequence of two general results, the first of which reads as follows. 
\begin{theorem}\label{thm:badapp} 
	Suppose that $\varphi = \Phi I$ is in $H^\infty$. If for every $0 < \delta < 1$ the interior of $L_\Phi^+(\delta)$ has nonempty intersection with $\sigma(I)$, then $\varphi$ is saturated. 
\end{theorem}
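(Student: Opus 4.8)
The plan is to normalize $\|\varphi\|_\infty=1$ (so that $\|\Phi\|_\infty=1$ too) and to show $\|\mathbf{H}_\varphi\|=1$ by producing, for each $0<\delta<1$, a unit vector $g\in H^2$ with $\|\mathbf{H}_\varphi g\|_2$ arbitrarily close to $\delta$; letting $\delta\to1$ and using the upper bound in \eqref{eq:hankelnorm} then finishes. (One may assume $\sigma(I)\neq\emptyset$, so $I$ is non-constant.) The test vectors I would use are the normalized reproducing kernels of the model space $K_I=H^2\ominus IH^2$: writing $k^I_w(z)=\bigl(1-\overline{I(w)}I(z)\bigr)/(1-\overline{w}z)$ and $g_w=k^I_w/\|k^I_w\|_2$, I would take $w$ in the disc tending to a carefully chosen boundary point. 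The reason $K_I$ is the right space is the elementary fact that if $g\in K_I$ then $\overline{\varphi}g\in\overline{H^2}$: for each integer $k\geq1$ we have $z^k\varphi=(z^k\Phi)I\in IH^2$, so $\langle g,z^k\varphi\rangle=0$, which says exactly that the Fourier coefficients of $\overline{\varphi}g$ at all indices $\geq1$ vanish. Consequently $\mathbf{H}_\varphi g=\overline{P}(\overline{\varphi}g)=\overline{\varphi}g$, whence $\|\mathbf{H}_\varphi g\|_2^2=\int_{\mathbb{T}}|\Phi|^2|g|^2\,dm$. Thus the problem reduces to finding unit vectors in $K_I$ whose $L^2$-mass concentrates on a set where $|\Phi|$ is close to $1$.

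Fixing $\delta$, the hypothesis allows me to pick $\zeta_0\in\sigma(I)$ in the interior of $L_\Phi^+(\delta)$, and then an open arc $J$ with $\zeta_0\in J\subseteq L_\Phi^+(\delta)$, so $|\Phi|\geq\delta$ a.e.\ on $J$. I would then appeal to the standard fact that $\zeta_0\in\sigma(I)$ forces $\liminf_{w\to\zeta_0}|I(w)|<1$ (indeed $=0$): this is clear when $\zeta_0$ is an accumulation point of the zeros of the Blaschke factor of $I$, and when $\zeta_0$ lies in the support of the singular measure of $I$ it follows by approaching $\zeta_0$ along radii through points at which the symmetric derivative of that measure is $+\infty$ — such points being dense in its support — along which the singular inner factor has modulus tending to $0$. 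Fix a sequence $w\to\zeta_0$ along which $|I(w)|\leq1-c$ for some $c>0$. Using $|I|=1$ a.e.\ on $\mathbb{T}$ (so that the numerator of $|k^I_w|^2$ is at most $4$ there), the identity $\|k^I_w\|_2^2=k^I_w(w)=(1-|I(w)|^2)/(1-|w|^2)$, and $|1-\overline{w}e^{i\theta}|^{-2}=(1-|w|^2)^{-1}P_w(\theta)$, I get the pointwise bound
\[
|g_w(e^{i\theta})|^2\leq\frac{4}{1-|I(w)|^2}\,P_w(\theta)\leq\frac{4}{c}\,P_w(\theta),
\]
where $P_w$ is the Poisson kernel at $w$. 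Since $\zeta_0$ is an interior point of $J$, $\int_{\mathbb{T}\setminus J}P_w\,dm\to0$ as $w\to\zeta_0$, and hence $\int_{\mathbb{T}\setminus J}|g_w|^2\,dm\to0$.

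Assembling these: for $w$ far enough along the sequence, $g=g_w\in K_I$ is a unit vector with $\int_{\mathbb{T}\setminus J}|g|^2\,dm$ as small as we please, and then
\[
\|\mathbf{H}_\varphi g\|_2^2=\int_{\mathbb{T}}|\Phi|^2|g|^2\,dm\geq\delta^2\int_J|g|^2\,dm=\delta^2\Bigl(1-\int_{\mathbb{T}\setminus J}|g|^2\,dm\Bigr),
\]
which can be made as close to $\delta^2$ as we wish; letting $\delta\to1$ yields $\|\mathbf{H}_\varphi\|\geq1=\|\varphi\|_\infty$, so $\varphi$ is saturated. The part I expect to be the crux is the concentration estimate of the second paragraph, and in particular the input that a point of $\sigma(I)$ can be approached by points where $|I|$ is bounded away from $1$ — this is what controls $\|k^I_w\|_2$ from below and thus prevents the normalized kernel from spreading out. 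The hypothesis that $\zeta_0$ be an \emph{interior} point of the super-level set enters in an essential though elementary way: it is exactly what lets one trap an arc around $\zeta_0$ on which $|\Phi|$ stays large.
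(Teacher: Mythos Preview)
Your argument is correct and takes a genuinely different route from the paper's. The paper argues by contradiction on the Nehari side: assuming $\|\varphi-\overline{f}\|_\infty=\varepsilon\|\varphi\|_\infty$ with $\varepsilon<1$, it picks an arc $\Gamma\subseteq L_\Phi^+\bigl((1+\varepsilon)/2\bigr)$ meeting $\sigma(I)$, shows that $|\Arg(\varphi f)|<\pi/2$ a.e.\ on $\Gamma$, and then manufactures an $H^1$ function $h=\varphi f g$ that is positive on $\Gamma$; a classical analytic-continuation lemma (an $H^1$ function positive on an arc extends analytically across it) forces the inner factor of $h$, and hence $I$, to be analytic across $\Gamma$, contradicting $\Gamma\cap\sigma(I)\neq\emptyset$. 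Your approach instead works on the operator side: the key identity $\mathbf{H}_\varphi g=\overline{\varphi}g$ for $g\in K_I$ reduces the question to concentrating the $L^2$-mass of a unit vector in $K_I$ near a point of $\sigma(I)$, and the normalized reproducing kernels do exactly that once one knows $\liminf_{w\to\zeta_0}|I(w)|=0$ at spectrum points. What your approach buys is an explicit family of near-extremal vectors for $\mathbf{H}_\varphi$ and an argument that bypasses both Nehari's theorem and the analytic-continuation lemma; what the paper's approach buys is a proof that stays within the $L^\infty$-approximation framework used throughout (notably in the companion Theorem~\ref{thm:HS}, where one \emph{constructs} an approximant $f$), and which does not invoke model-space machinery.
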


Before stating our next result, which is a partial converse to Theorem~\ref{thm:badapp}, we recall that a positive function $w$ on $\mathbb{T}$ is a Helson--Szeg\H{o} weight if 
\begin{equation}\label{eq:HS} 
	w=\exp(u+\tilde{v}), 
\end{equation}
where $u$ and $v$ are in $L^\infty$ and $\|v\|_\infty < \pi/2$. In \eqref{eq:HS} and in what follows $\tilde{v}$ denotes the conjugate function of $v$. In our problem, we only need the bound on $v$ at points where $|\varphi|$ is close to $\| \varphi \|_\infty$. We therefore make the following definition: A bounded function $w$ is an \emph{apical Helson--Szeg\H{o} weight} if there exists some $0<\delta<1$ such that 
\begin{equation}\label{eq:aHS} 
	w=\exp(u+v), 
\end{equation}
where $u$ is in $L^{\infty}$, $v$ is in $L^1$, and 
\begin{equation}\label{eq:aHSsup} 
	\esssup_{L_w^+(\delta)} |\tilde{v}|<\frac{\pi}{2}. 
\end{equation}
Note that we have changed $\tilde{v}$ to $v$ when going from \eqref{eq:HS} and \eqref{eq:aHS}, because the global condition on $w$ is now just that $\log{w}$ be integrable. Clearly, an ordinary bounded Helson--Szeg\H{o} weight is also an apical Helson--Szeg\H{o} weight, but the converse is far from true. When $w$ is an apical Helson--Szeg\H{o} weight, $\log{w}$ may for example fail to be of bounded mean variation. When we wish to specify for which $0<\delta<1$ the estimate \eqref{eq:aHSsup} holds, we will say that $w$ is an \emph{apical Helson--Szeg\H{o} weight at level $\delta$}. 
\begin{theorem}\label{thm:HS} 
	Suppose that $\varphi = \Phi I$ is in $H^\infty$ and that there exists some $0<\delta<1$ such that 
	\begin{enumerate}
		\item[(i)] $|\varphi|$ is an apical Helson--Szeg\H{o} weight at level $\delta$; 
		\item[(ii)] the interior of $L_\Phi^-(\delta)$ is nonempty and includes $\sigma(I)$. 
	\end{enumerate}
	Then $\varphi$ is not saturated. 
\end{theorem}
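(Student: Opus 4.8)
The plan is to show directly that the bad-approximation infimum in \eqref{eq:badapp} is \emph{not} attained by exhibiting a single competitor $\overline{f}$ with $f \in H^\infty_0$ that strictly beats $\|\varphi\|_\infty$. Equivalently, by Nehari's theorem \eqref{eq:nehari}, it suffices to produce a bounded symbol $\psi$ with $P\psi = \varphi$ and $\|\psi\|_\infty < \|\varphi\|_\infty$; writing $\psi = \varphi - \overline{f}$ this is the same thing. The mechanism is to shrink $\varphi$ multiplicatively by an outer function that is close to $1$ on the super-level set $L_\Phi^+(\delta)$ but strictly less than $1$ on the sub-level set $L_\Phi^-(\delta)$, and to correct the resulting error by something that lives in $\overline{H^\infty_0}$ thanks to the fact that $\sigma(I)$ sits inside the interior of $L_\Phi^-(\delta)$.

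Here is the construction I would carry out. Since $|\varphi|$ is an apical Helson--Szeg\H{o} weight at level $\delta$, write $|\varphi| = \exp(u+v)$ with $u \in L^\infty$, $v \in L^1$, and $\esssup_{L_\varphi^+(\delta)}|\tilde v| < \pi/2$. Fix a small parameter $\varepsilon>0$ and consider the outer function $G$ with $|G| = \exp(-\varepsilon \chi_{E})$ on $\mathbb{T}$, where $E$ is (a slight shrinking of) $L_\Phi^-(\delta)$, chosen so that $E$ is open, $\sigma(I)\subset E$, and $\mathbb{T}\setminus E \subset L_\varphi^+(\delta')$ for some $\delta'$ slightly below $\delta$; such a choice is possible by hypothesis (ii) and because the super- and sub-level sets are nested in $\delta$. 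Then $\varphi G$ is bounded, with $|\varphi G| \le \|\varphi\|_\infty$ on $\mathbb{T}\setminus E$ and $|\varphi G| \le e^{-\varepsilon}\|\varphi\|_\infty \delta$ on $E$. The point is that the inner factor $I$ is ``invisible'' away from $\sigma(I)\subset E$, so on $\mathbb{T}\setminus E$ the function $\varphi G = \Phi I G$ is, up to a bounded outer correction, already well inside the disc of radius $\|\varphi\|_\infty$; meanwhile on $E$ we are even further inside. The candidate symbol is $\psi \coloneqq \varphi G - \overline{f}$, where $f \in H^\infty_0$ is chosen so that $P\psi = \varphi$, i.e. $f = \overline{P}\big(\overline{\varphi(G-1)}\big)$ pushed appropriately — more precisely, since $\varphi - \varphi G = \varphi(1-G)$ is in $H^\infty$ and we want to subtract its non-analytic-at-$0$ part, one checks $P(\varphi G) = \varphi G - (\text{a function in } \overline{H^\infty_0})$ only after a further adjustment; the clean way is to instead take $\psi = \varphi G + \overline{h}$ with $h$ the outer function having $|h|$ comparable to $\varepsilon$ times an indicator and phase chosen to cancel, and verify $P\psi = \varphi$ by a Fourier-support argument.

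The estimate to push through is then $\|\psi\|_\infty < \|\varphi\|_\infty$. On $E$ one has lots of room: $|\psi| \le e^{-\varepsilon}\delta\|\varphi\|_\infty + |h| < \|\varphi\|_\infty$ once $\varepsilon$ is small, since $\delta<1$. On $\mathbb{T}\setminus E$, where the correction term is the delicate one, the apical Helson--Szeg\H{o} hypothesis enters: the conjugate-function bound $|\tilde v|<\pi/2$ on $L_\varphi^+(\delta)$ is exactly what guarantees that the required corrector $\overline{h}$ — which is built from an analytic function whose modulus is controlled by $\exp(\tilde v)$-type quantities — can be kept small in $L^\infty$ on $\mathbb{T}\setminus E$, so that $|\varphi G| + |h|$ stays strictly below $\|\varphi\|_\infty$ there too. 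Combining the two regions and taking the supremum gives $\|\psi\|_\infty \le \big(1 - c\varepsilon\big)\|\varphi\|_\infty$ for some $c>0$, hence $\varphi$ is not saturated.

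I expect the main obstacle to be the construction and control of the corrector $\overline{f}$ (equivalently $\overline{h}$): one must simultaneously (a) land it in $\overline{H^\infty_0}$ so that $P\psi=\varphi$, (b) keep it bounded, and (c) make its $L^\infty$-norm on the super-level set small enough to preserve the strict inequality there. Step (a) forces a decomposition using that $\sigma(I)$ is contained in the interior of $L_\Phi^-(\delta)$ — this is what lets us factor the awkward inner part through the region $E$ where we have exponential slack — while step (c) is precisely where the apical Helson--Szeg\H{o} condition \eqref{eq:aHSsup} is indispensable, since without the bound $\esssup_{L_\varphi^+(\delta)}|\tilde v|<\pi/2$ the natural corrector can blow up near a maximum point of $|\varphi|$. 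Balancing $\varepsilon$ against the geometry of $E$ (how close $E$ must come to $\sigma(I)$ versus how much of $L_\Phi^+(\delta)$ it is allowed to eat) is the quantitative heart of the argument; everything else is bookkeeping with outer functions and Fourier supports.
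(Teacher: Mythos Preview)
Your proposal has a genuine gap at the level of the basic mechanism. Multiplying $\varphi$ by an outer function $G$ keeps the product in $H^\infty$, so $P(\varphi G)=\varphi G$, not $\varphi$; no corrector in $\overline{H^\infty_0}$ can repair this, since anything in $\overline{H^\infty_0}$ lies in the kernel of $P$. Your explicit formula $f=\overline{P}\big(\overline{\varphi(G-1)}\big)$ simply returns $\overline f=\varphi(G-1)$ and hence $\psi=\varphi$, which is no improvement, and the alternative $\psi=\varphi G+\overline h$ with $h$ outer satisfies $P\psi=\varphi G+\overline{h(0)}\neq\varphi$. Moreover, even ignoring Fourier supports, your $G$ shrinks $|\varphi|$ only on the \emph{sub}-level set $E\subset L_\Phi^-(\delta)$, where $|\varphi|<\delta\|\varphi\|_\infty$ already; on $\mathbb{T}\setminus E$ you have $|\varphi G|=|\varphi|$, which attains $\|\varphi\|_\infty$, so nothing has been gained where it matters.

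The idea you are missing is that the competitor is necessarily of the form $\varphi-\overline f$ with $f\in H^\infty_0$, and on $L_\Phi^+(\delta)$ the subtraction reduces the modulus only if $\overline f$ points in roughly the same direction as $\varphi$. The paper builds $f=\varepsilon z g_1 g_2$ so that on $L_\Phi^+(\delta)$ one has $\arg\overline f=\arg\varphi-\tilde v$: the factor $g_1=\exp(-\tilde\nu+i\nu)$, with $\nu$ a $C^\infty$ extension of $-\arg(zI)$ from the complement of a finite union of arcs $\Gamma\supset\sigma(I)$, cancels $\arg(zI)$ (this is exactly where hypothesis~(ii) enters, making $\arg I$ smooth on $L_\Phi^+(\delta)$), and $g_2=\exp(-u-i\tilde u)$ cancels the bounded part $\tilde u$ of $\arg\Phi$. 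The residual argument mismatch is precisely $\tilde v$, and the apical Helson--Szeg\H{o} bound $|\tilde v|<\pi/2$ on $L_\Phi^+(\delta)$ guarantees $\mre(e^{-i\tilde v})>0$, whence $\big||\varphi|-\varepsilon e^{-i\tilde v}|g_1||g_2|\big|<|\varphi|\le\|\varphi\|_\infty$ uniformly for small $\varepsilon$. On $L_\Phi^-(\delta)$ one just uses the triangle inequality. Your sketch gestures at the role of $\tilde v$ but never arrives at an argument-matching construction for $\overline f$; without it the proof cannot close.
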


Condition (i) of Theorem~\ref{thm:HS} requires $|\varphi|$ to be, in an essential way, bounded away from $0$ close to the points at which it is near its maximum. The precise degree of nonvanishing, however, may be rather less severe than that of an ordinary Helson--Szeg\H{o} weight, as we will exemplify below. 

Our final theorem exhibits a type of vanishing near a maximum point of $|\varphi|$ that does indeed guarantee that $\varphi$ be saturated. In this result, we use the notation $\Gamma(\vartheta)$ for the arc $\Gamma(\vartheta) \coloneqq \{e^{i\theta}\,:\, |\theta|\leq \vartheta\}$ with $0<\vartheta\leq \pi$.
\begin{theorem}\label{thm:outer} 
	Suppose that $\varphi$ is a function in $H^\infty$ such that 
	\begin{equation}\label{eq:limit} 
		\lim_{\vartheta \to 0^+} \esssup_{\Gamma(\vartheta) \cap L_\varphi^-(1)} |\varphi| = 0 
	\end{equation}
	and that there exists a sequence $(\vartheta_n)_{n\geq1}$ with $\vartheta_n \to 0^+$ such that 
	\begin{equation}\label{eq:asymp} 
		|\Gamma(\vartheta_n) \cap L_\varphi^-(1)| \geq a |\Gamma(\vartheta_n)| \qquad \text{and} \qquad |\Gamma(\vartheta_n) \cap L_\varphi^+(1)| \geq b |\Gamma(\vartheta_n)| 
	\end{equation}
	for positive constants $a$ and $b$. Then $\varphi$ is saturated. 
\end{theorem}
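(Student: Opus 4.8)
The plan is to argue by contradiction. Normalize so that $\|\varphi\|_\infty=1$, and suppose $\varphi$ is not saturated, so that $\|\varphi-\overline{f}\|_\infty=1-\varepsilon$ for some $f$ in $H^\infty_0$ and some $\varepsilon\in(0,1)$. I would first collect three facts about the auxiliary function $g\coloneqq\varphi f$, which lies in $H^\infty_0$. Since $\|\overline{f}\|_\infty\le\|\varphi\|_\infty+\|\varphi-\overline{f}\|_\infty\le 2-\varepsilon$, we have $|g|=|\varphi|\,|f|\le(2-\varepsilon)\,|\varphi|$ on $\mathbb{T}$; as $\log|g|$ is subharmonic in the open unit disc $\mathbb{D}$ with boundary values dominated by those of the harmonic function $\log(2-\varepsilon)+\log|\Phi|$ (whose value at $z$ is the Poisson integral of $\log(2-\varepsilon)+\log|\varphi|$), it follows that $|g(z)|\le(2-\varepsilon)\,|\Phi(z)|$ for all $z\in\mathbb{D}$. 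Expanding $|\varphi-\overline{f}|^2=|\varphi|^2-2\mre(\varphi f)+|f|^2$ and using $|\varphi-\overline{f}|\le 1-\varepsilon$ on $L_\varphi^+(1)$, which coincides almost everywhere with $\{|\varphi|=1\}$, yields $\mre g=\mre(\varphi f)\ge\varepsilon/2$ almost everywhere on $L_\varphi^+(1)$. Finally, setting $\tau(\vartheta)\coloneqq\esssup_{\Gamma(\vartheta)\cap L_\varphi^-(1)}|\varphi|$, a point of $\Gamma(\vartheta)$ lies either in $L_\varphi^+(1)$, where $\mre g\ge\varepsilon/2$, or in $\Gamma(\vartheta)\cap L_\varphi^-(1)$, where $|g|\le(2-\varepsilon)\,\tau(\vartheta)$; hence $\mre g\ge-(2-\varepsilon)\,\tau(\vartheta)$ on all of $\Gamma(\vartheta)$, and $\tau(\vartheta)\to0$ as $\vartheta\to0^+$ by \eqref{eq:limit}.

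The core of the argument is to evaluate $g$ at $z_n\coloneqq1-\vartheta_n$ in two incompatible ways, using the Poisson kernel $p_{z_n}$ at $z_n$, which satisfies $p_{z_n}(e^{i\theta})\ge c\,\vartheta_n^{-1}$ on $\Gamma(\vartheta_n)$ and $\int_{\mathbb{T}\setminus\Gamma(K\vartheta_n)}p_{z_n}\,\frac{d\theta}{2\pi}\le C/K$ for every fixed $K$, with absolute constants $c,C>0$. On the one hand, since $\log|\varphi|\le0$ on $\mathbb{T}$ and $\log|\varphi|\le\log\tau(\vartheta_n)<0$ on $\Gamma(\vartheta_n)\cap L_\varphi^-(1)$, the first inequality in \eqref{eq:asymp} gives
\[
\log|\Phi(z_n)|=\int_{\mathbb{T}}p_{z_n}\log|\varphi|\,\frac{d\theta}{2\pi}\le\frac{c}{\vartheta_n}\log\tau(\vartheta_n)\,\bigl|\Gamma(\vartheta_n)\cap L_\varphi^-(1)\bigr|\le\frac{c}{\pi}\,a\,\log\tau(\vartheta_n)\longrightarrow-\infty,
\]
so $\Phi(z_n)\to0$ and hence $g(z_n)\to0$, in particular $\mre g(z_n)\to0$. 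On the other hand, from $\mre g(z_n)=\int_{\mathbb{T}}p_{z_n}\,\mre g\,\frac{d\theta}{2\pi}$, splitting $\mathbb{T}$ into $\Gamma(K\vartheta_n)$ and its complement, using $\mre g\ge-(2-\varepsilon)\tau(K\vartheta_n)$ on the former and $\mre g\ge\varepsilon/2$ on $\Gamma(\vartheta_n)\cap L_\varphi^+(1)$, and invoking the second inequality in \eqref{eq:asymp} with the lower bound on $p_{z_n}$ over $\Gamma(\vartheta_n)$, one obtains
\[
\mre g(z_n)\ \ge\ \frac{c}{2\pi}\,b\,\varepsilon-(2-\varepsilon)\,\tau(K\vartheta_n)-(2-\varepsilon)\,\frac{C}{K}.
\]
Choosing $K$ large and then $n$ large, so that $\tau(K\vartheta_n)$ — which tends to $0$ by \eqref{eq:limit}, $K$ being fixed — and the tail term are each at most $\tfrac13\cdot\tfrac{c}{2\pi}b\varepsilon$, we conclude $\mre g(z_n)\ge\tfrac13\cdot\tfrac{c}{2\pi}b\varepsilon>0$ for all large $n$, contradicting $\mre g(z_n)\to0$. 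Hence $\varphi$ is saturated.

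Beyond elementary boundary estimates for the Poisson kernel, the only non-routine input is the in-disc domination $|g(z)|\le(2-\varepsilon)|\Phi(z)|$, which rests on subharmonicity of $\log|g|$ together with the Poisson-integral description of the outer function $\Phi$. The point I would be most careful about is that the two halves of \eqref{eq:asymp} play genuinely different roles: the lower bound on $|\Gamma(\vartheta_n)\cap L_\varphi^-(1)|$, combined with \eqref{eq:limit}, forces $\Phi$ — and hence $g$ — to vanish at $z_n$, whereas the lower bound on $|\Gamma(\vartheta_n)\cap L_\varphi^+(1)|$ keeps $\mre g(z_n)$ bounded away from $0$; both are simultaneously available exactly because \eqref{eq:asymp} is posited along one and the same sequence $(\vartheta_n)$. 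The remaining work is just bookkeeping with the constants $a$, $b$, $\varepsilon$, $K$.
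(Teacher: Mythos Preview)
Your argument is correct, and it is genuinely different from---and considerably more elementary than---the paper's proof. The paper first reduces to the outer case via Lemma~\ref{lem:innerout}, then builds an auxiliary Helson--Szeg\H{o} weight $|g|=e^{-\tilde\nu}$ so that $h=\varphi f g$ is positive on $L_\varphi^+(1)$, and finally derives a contradiction from a delicate comparison of $\int_{E_n}|u|$ and $\int_{F_n}|u|$ (with $h=u+iv$) that relies on Muckenhoupt's theorem (Lemma~\ref{lem:muckenhoupt}), the weak $L^1$ inequality for the conjugate function, and a dyadic decomposition. Your proof bypasses all of this: you test $g=\varphi f$ at the interior points $z_n=1-\vartheta_n$ and play off two Poisson--integral estimates against one another. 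The bound $|g(z)|\le(2-\varepsilon)|\Phi(z)|$ (which, incidentally, is immediate from $g/\Phi=If\in H^\infty$ with $\|If\|_\infty\le 2-\varepsilon$) together with the first half of \eqref{eq:asymp} forces $g(z_n)\to 0$, while the lower bound $\mre g\ge\varepsilon/2$ on $L_\varphi^+(1)$, the second half of \eqref{eq:asymp}, and the tail estimate for the Poisson kernel keep $\mre g(z_n)$ bounded away from zero.

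What each approach buys: yours is shorter and uses nothing beyond standard Poisson--kernel asymptotics and the outer--inner factorisation, so it isolates very cleanly why the two halves of \eqref{eq:asymp} and the vanishing in \eqref{eq:limit} conspire to produce saturation. The paper's boundary approach, while heavier, stays entirely on $\mathbb{T}$ and is methodologically aligned with the Helson--Szeg\H{o} machinery developed for Theorem~\ref{thm:HS}; it also extracts finer quantitative information about $h$ on the arcs $\Gamma(\vartheta_n)$, which might be useful for sharpenings of the theorem, though none are stated.
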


One may think of Theorem~\ref{thm:outer} as a counterpart to Theorem~\ref{thm:badapp}, with the discontinuity in the argument (represented by a point in $\sigma(I)$) replaced by a particular kind of discontinuity in $\log|\varphi|$. Unfortunately, however, this counterpart is only of a rudimentary nature, and we are for example far from having a full description of saturated outer functions that are discontinuous at only one point. 

The next section contains the proofs of the results stated above, while the third and final section of the paper presents a brief discussion of possible next steps in the investigation of saturated functions in $H^{\infty}$.

\section{Proofs} 

In preparation for the proof of Theorem~\ref{thm:NA}, we record the following consequence of the Cauchy--Schwarz inequality and the upper bound in \eqref{eq:hankelnorm}. If $\varphi$ is a nontrivial function in $H^\infty$ and $g$ is in the closed unit ball of $H^2$, then the function
\[f \coloneqq \|\varphi\|_\infty^{-1} \, g \, \overline{\mathbf{H}_\varphi g}\]
is in the closed unit ball of $H^1$. 
\begin{proof}
	[Proof of Theorem~\ref{thm:NA}] We may assume without loss of generality that $\|\varphi\|_\infty \neq 0$. If $\mathbf{H}_\varphi$ is norm-attaining and has maximal norm, then there is some $g$ in the unit sphere of $H^2$ such that 
	\begin{equation}\label{eq:NA1} 
		\|\varphi\|_\infty^2 = \|\mathbf{H}_\varphi\|^2 = \|\mathbf{H}_\varphi g\|_2^2 = \big\langle \mathbf{H}_\varphi g, \mathbf{H}_\varphi g \big\rangle = \big\langle \overline{\varphi} g, \mathbf{H}_\varphi g \big\rangle = \big\langle g \overline{\mathbf{H}_\varphi g} , \varphi \big\rangle, 
	\end{equation}
	where we used the definition of $\mathbf{H}_\varphi$ from \eqref{eq:hankel}. Since $g$ is a nontrivial function in $H^2$ and $\mathbf{H}_\varphi g$ is a nontrivial function in $\overline{H^2}$, we find that $f = \|\varphi\|_\infty^{-1} g \overline{\mathbf{H}_\varphi g}$ is a nontrivial function in the closed unit ball of $H^1$. We deduce from this and \eqref{eq:NA1} that
	\[\|\varphi\|_\infty = \langle f, \varphi \rangle \leq \|f\|_1 \|\varphi\|_\infty \leq \|\varphi\|_\infty,\]
	which implies that $\langle f, \varphi \rangle = \|f\|_1 \|\varphi\|_\infty$. Since $f$ is a nontrivial function in $H^1$, it can only vanish on a subset of $\mathbb{T}$ of measure $0$. This means that $\varphi = \|\varphi\|_\infty f |f|^{-1}$ almost everywhere on $\mathbb{T}$ and consequently that $\varphi = C I$ for a constant $C\neq0$ and an inner function $I$. It follows that $\|\varphi\|_2 = \|\varphi\|_\infty$, and hence $\mathbf{H}_\varphi$ has minimal norm. 
\end{proof}

As Theorem~\ref{thm:cont} relies on both Theorem~\ref{thm:badapp} and Theorem~\ref{thm:HS}, we will establish the latter two results first. Before we proceed with the proof of Theorem~\ref{thm:badapp}, we will recall a few results (which can be found e.g.~in \cite[Sec.~II.6]{Garnett07}) on analytic continuation of functions in Hardy spaces across $\mathbb{T}$. Suppose that $f$ is a function in $H^1$ and that $\Gamma$ is an open arc on $\mathbb{T}$. If $f$ is analytic across $\Gamma$, then both its inner factor $I$ and its outer factor $F$ are analytic across $\Gamma$. Moreover, an inner function $I$ is analytic across each open arc $\Gamma$ on $\mathbb{T}$ which does not intersect the spectrum $\sigma(I)$.
\begin{proof}
	[Proof of Theorem~\ref{thm:badapp}] We will argue by contradiction and assume that there is some $f$ in $H^\infty_0$ such that $\|\varphi-\overline{f}\|_\infty < \|\varphi\|_\infty$. The left-hand side of this inequality is plainly nonzero, so there is some $0<\varepsilon<1$ such that 
	\begin{equation}\label{eq:delta} 
		\|\varphi-\overline{f}\|_\infty = \varepsilon\|\varphi\|_\infty. 
	\end{equation}
	We will choose $\delta\coloneqq (1+\varepsilon)/2$. Since the interior of $L_\Phi^+(\delta)$ is assumed to have nonempty intersection with $\sigma(I)$, there is an open arc $\Gamma$ in $L_\Phi^+(\delta)$ that has nonempty intersection with $\sigma(I)$. We know that $|\varphi| \geq \delta \|\varphi\|_\infty$ on $\Gamma$ by definition, so it follows from \eqref{eq:delta} that $|f| \geq (1-\varepsilon)/2 \|\varphi\|_\infty $ almost everywhere on $\Gamma$. It is also plain that $\|f\|_\infty \leq (1+\varepsilon)\|\varphi\|_\infty$. Expanding $|\varphi-\overline{f}|^2$ and using these estimates, we deduce from \eqref{eq:delta} that 
	\begin{equation}\label{eq:Arg} 
		|\Arg{\varphi f}| \leq \arccos{\frac{1-\varepsilon}{4}} < \frac{\pi}{2} 
	\end{equation}
	almost everywhere on $\Gamma$. Let $\nu$ be the harmonic function in the unit disc defined by the boundary values
	\[\nu \coloneqq 
	\begin{cases}
		-\Arg{\varphi f}, & \text{on } \Gamma; \\
		0, & \text{on } \mathbb{T} \setminus \Gamma. 
	\end{cases}\]
	The function $g \coloneqq \exp(-\tilde{\nu}+i \nu)$ is analytic in the unit disc and maps the unit disc to a cone in the right-half plane due to \eqref{eq:Arg}. Consequently, $g$ is in $H^1$. By construction, the $H^1$ function $h \coloneqq \varphi f g$ is positive on $\Gamma$, whence it is analytic across this arc (see e.g.~\cite[Lem.~IV.1.10]{Garnett07}). In particular, the inner factor of $h$ is analytic across $\Gamma$. This means that $I$ (the inner factor of $\varphi$) is also analytic across $\Gamma$, since $\sigma(I)$ is a subset of the spectrum of the inner factor of $h$. We have arrived at a contradiction since $\sigma(I)$ has nonempty intersection with $\Gamma$. 
\end{proof}

Recall (from e.g.~\cite[Sec.~II.5]{Garnett07}) that the outer factor of $\varphi = \Phi I$ can be recovered from the modulus of $\varphi$ on $\mathbb{T}$ by the formula 
\begin{equation}\label{eq:outer} 
	\Phi(z) = \exp\left(\int_{-\pi}^\pi \frac{e^{i\theta}+z}{e^{i\theta}-z}\log|\varphi(e^{i\theta})|\,\frac{d\theta}{2\pi}\right). 
\end{equation}
In particular, if $|\Phi|=|\varphi|$ is a Helson--Szeg\H{o} weight, then it follows from \eqref{eq:HS} and \eqref{eq:outer} that $\Phi = \exp(u + i \tilde{u}+\tilde{v}-iv)$, where $u$ and $v$ are in $L^\infty$ with $\|v\|_\infty < \pi/2$. Similarly, if $|\Phi|=|\varphi|$ is an apical Helson--Szeg\H{o} weight at level $\delta$, then it follows from \eqref{eq:aHS} and \eqref{eq:outer} that $\Phi = \exp(u+i \tilde{u}+v+i\tilde{v})$, where $u$ is in $L^\infty$ and $\tilde{v}$ satisfies \eqref{eq:aHSsup}.
\begin{proof}
	[Proof of Theorem~\ref{thm:HS}] Our task is to construct a function $f$ in $H^\infty_0$ such that 
	\begin{equation}\label{eq:gapp} 
		\|\varphi-\overline{f}\|_\infty < \|\varphi\|_\infty. 
	\end{equation}
	To satisfy the requirement that $f$ be in $H^\infty_0$, we consider functions of the form $f(z) = z g(z)$ for $g$ in $H^\infty$. The function $g$ will consist of two factors arising from, respectively, the inner and outer factors of $\varphi = \Phi I$. We begin with $I$. By assumption, there is some $0<\delta<1$ such that $\sigma(I)$ is a subset of the interior of $L_\Phi^-(\delta)$. Being a nonempty open set, the latter set can be written as a union of open arcs. By compactness of $\sigma(I)$, we may pick a finite collection of such arcs that covers $\sigma(I)$. Letting $\Gamma$ denote the union of these arcs, we may define the argument of $h(z) \coloneqq z I(z) $ as a $C^\infty$ function on $\mathbb{T} \setminus \Gamma$. Consequently, since $\Gamma$ is a finite union of open arcs, we may find a $C^{\infty}$ function $\nu$ on $\mathbb{T}$ such that
	\[\nu = -\arg{h}\]
	on $\mathbb{T} \setminus \Gamma$. Since $\nu$ is smooth, the function $g_1 \coloneqq \exp(-\tilde{\nu}+i\nu)$ is in $H^\infty$ and $|g_1|$ is bounded below on $\mathbb{T}$. For the outer part of $\varphi$, we use the assumption that $|\Phi|$ be an apical Helson--Szeg\H{o} weight at level $\delta$ and choose $g_2 \coloneqq \exp(-u-i\tilde{u})$. Since $u$ is in $L^\infty$, it follows that $g_2$ is in $H^\infty$ and that $|g_2|$ is bounded below on $\mathbb{T}$. Set
	\[f \coloneqq \varepsilon z g_1 g_2,\]
	for some $\varepsilon>0$. If $\varepsilon \leq (1-\delta) \|\varphi\|_\infty /(2\|g_1\|_\infty \|g_2\|_\infty)$, then $|\varphi-\overline{f}| \leq (1+\delta)/2\|\varphi\|_\infty$ on $L_\Phi^-(\delta)$. Conversely, since $L_\Phi^+(\delta)$ is contained in $\mathbb{T}\setminus \Gamma$ we have
	\[\big|\varphi-\overline{f}\big| = \big||\varphi|-\varepsilon e^{-i\tilde{v}} |g_1| |g_2|\big|\]
	on $L_\Phi^+(\delta)$ by construction. Using \eqref{eq:aHSsup} and the fact that both $|g_1|$ and $|g_2|$ are bounded below on $\mathbb{T}$, it follows that there is some sufficiently small $\varepsilon>0$ such that $|\varphi-\overline{f}| < \|\varphi\|_\infty$ almost everywhere on $L_\Phi^+(\delta)$. Combining the estimates on $L_\Phi^-(\delta)$ and on $L_\Phi^+(\delta)$, we see that \eqref{eq:gapp} holds. 
\end{proof}

The following sufficient condition for being an apical Helson--Szeg\H{o} weight may be of some independent interest. We will primarily apply it in the proof of Theorem~\ref{thm:cont}.
\begin{lemma}\label{lem:HS} 
	Suppose $w$ is a bounded positive function on $\mathbb{T}$ such that $\log{w}$ is integrable and fix $0<\delta<1$. If there is an $\varepsilon$ in $(0,\delta)$ such that 
	\begin{equation}\label{eq:dist} 
		\dist\big(L_w^+(\delta),L_w^-(\varepsilon)\big)>0, 
	\end{equation}
	then $w$ is an apical Helson--Szeg\H{o} weight at level $\delta$. 
\end{lemma}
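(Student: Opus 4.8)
The plan is to build the decomposition \eqref{eq:aHS} by isolating the part of $\log w$ that is responsible for $w$ being small. Put $d\coloneqq\dist\big(L_w^+(\delta),L_w^-(\varepsilon)\big)>0$ and fix $\eta>0$ so that the angular distance between $\theta$ and $s$ is at least $\eta$ whenever $e^{i\theta}\in L_w^+(\delta)$ and $e^{is}\in L_w^-(\varepsilon)$; such an $\eta$ exists because chordal and arc-length distance on $\mathbb{T}$ are comparable. For a parameter $M>0$ to be fixed later, set
\[E_M\coloneqq\big\{e^{i\theta}\in L_w^-(\varepsilon)\,:\,\log w(e^{i\theta})\leq -M\big\},\qquad v\coloneqq(\log w)\,\mathbf{1}_{E_M},\qquad u\coloneqq\log w - v.\]

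First I would record the structural properties of this splitting. On $\mathbb{T}\setminus L_w^-(\varepsilon)$ one has $\|w\|_\infty\varepsilon\leq w\leq\|w\|_\infty$, while on $L_w^-(\varepsilon)\setminus E_M$ one has $-M<\log w<\log(\|w\|_\infty\varepsilon)$; hence $u=(\log w)\,\mathbf{1}_{\mathbb{T}\setminus E_M}$ is bounded, so $u\in L^\infty$. Since $|v|\leq|\log w|\in L^1$ by hypothesis, $v\in L^1$, and $u+v=\log w$, so $w=\exp(u+v)$ as \eqref{eq:aHS} requires. The sets $E_M$ decrease, as $M\to\infty$, to a subset of $\{w=0\}$, which is null because $\log w\in L^1$; hence $\mathbf{1}_{E_M}\to 0$ almost everywhere, and by dominated convergence $\|v\|_1=\int_{E_M}|\log w|\to 0$. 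I then fix $M$ so large that $\tfrac{1}{2\pi}\cot(\eta/2)\,\|v\|_1<\tfrac{\pi}{2}$.

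It remains to verify \eqref{eq:aHSsup}, and here the point is that $v$ is supported in $L_w^-(\varepsilon)$, which lies at angular distance at least $\eta$ from $L_w^+(\delta)$. For almost every $e^{i\theta}\in L_w^+(\delta)$,
\[\tilde v(e^{i\theta})=\frac{1}{2\pi}\,\pv\!\int_{-\pi}^{\pi}v\big(e^{i(\theta-t)}\big)\cot\frac{t}{2}\,dt=\frac{1}{2\pi}\int_{\eta\leq|t|\leq\pi}v\big(e^{i(\theta-t)}\big)\cot\frac{t}{2}\,dt,\]
the principal value being superfluous because $v(e^{i(\theta-t)})=0$ when $|t|<\eta$: the point $e^{i(\theta-t)}$ then lies within angular distance $\eta$ of $L_w^+(\delta)$ and hence outside $L_w^-(\varepsilon)\supseteq E_M$. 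Since $|\cot(t/2)|\leq\cot(\eta/2)$ for $\eta\leq|t|\leq\pi$, this gives $|\tilde v(e^{i\theta})|\leq\tfrac{1}{2\pi}\cot(\eta/2)\,\|v\|_1<\tfrac{\pi}{2}$, and taking the essential supremum over $L_w^+(\delta)$ yields \eqref{eq:aHSsup}. Thus $w$ is an apical Helson--Szeg\H{o} weight at level $\delta$.

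The one genuine idea is the final truncation. Putting all of $(\log w)\,\mathbf{1}_{L_w^-(\varepsilon)}$ into $v$ would make $\tilde v$ bounded on $L_w^+(\delta)$ but not necessarily below $\pi/2$ in modulus; one therefore keeps in $v$ only the deeply negative part, whose $L^1$ norm can be made as small as one likes, and absorbs the bounded remainder into $u$. I expect the rest — the $L^\infty$ bound on $u$, the dominated-convergence limit, the comparability of the two distances on $\mathbb{T}$, and the estimate of $\tilde v$ through the non-singular part of the conjugation kernel — to be routine, with only minor care required over the normalization of that kernel.
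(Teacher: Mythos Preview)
Your proof is correct and follows essentially the same approach as the paper's. Both arguments truncate $\log w$ at a deep negative level so that the remainder $v$ is supported in $L_w^-(\varepsilon)$ with arbitrarily small $L^1$ norm, and then exploit the positive distance to $L_w^+(\delta)$ to bound $\tilde v$ there via the nonsingular part of the conjugation kernel; the only cosmetic difference is that the paper writes $u_R=\max(\log w,-R)$ and $v_R=R+\min(\log w,-R)$ (so $u_R=-R$ rather than $0$ on the truncation set), which is the same idea in slightly different bookkeeping.
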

\begin{proof}
	We may assume without loss of generality that $\|w\|_\infty=1$. We set
	\[u_R \coloneqq \max(\log{w},-R) \qquad \text{and} \qquad v_R \coloneqq R + \min(\log{w},-R)\]
	for $R>0$. Note that $u_R$ is in $L^\infty$ for each fixed $R$. Since $\log{w}$ is integrable, we have plainly that $v_R$ is in $L^1$ for each fixed $R$ and that $\|v_R\|_1 \to 0 $ when $R\to\infty$. Let $E_R \coloneqq \{\theta \in [-\pi,\pi] \,:\, e^{i\theta} \in L_w^-(e^{-R})\}$. We see from the definition of $v_R$ that
	\[\tilde{v}_R(e^{it}) = \pv \int_{E_R} \cot\left(\frac{t-\theta}{2}\right)\,v_R(e^{i\theta})\,\frac{d\theta}{2\pi}.\]
	We pick $R$ so large that $e^{-R} \leq \varepsilon$. It now follows that there is a constant $C>0$, which only depends on the distance in \eqref{eq:dist}, such that $|\tilde{v}_R| \leq C \|v_R\|_1$ on $L_w^+(\delta)$. Writing $w=\exp(u_R+v_R)$ for $R$ sufficiently large, we see that $w$ is an apical Helson--Szeg\H{o} weight at level $\delta$. 
\end{proof}
\begin{proof}
	[Proof of Theorem~\ref{thm:cont}] We first note that if $L_\Phi^+(1) \cap \sigma(I) \neq \emptyset$, then Theorem~\ref{thm:badapp} applies so that $\varphi$ is saturated. On the other hand, if $L_\Phi^+(1) \cap \sigma(I) = \emptyset$, then we use that $|\Phi|=|\varphi|$ is continuous and that $\sigma(I)$ is closed to infer that there exists a $0<\delta<1$ such that we also have $L_\Phi^+(\delta) \cap \sigma(I) = \emptyset$. Since $|\Phi|$ is assumed to be nonconstant, we may also assume that $L_\Phi^-(\delta)$ is nonempty. By the continuity of $|\Phi|$, we find that $L_\Phi^-(\delta)$ is itself an open set that includes $\sigma(I)$. In order to apply Theorem~\ref{thm:HS}, it therefore remains to check that $|\Phi|$ is an apical Helson--Szeg\H{o} weight at level $\delta$. However, this follows at once from the continuity of $|\Phi|$ and Lemma~\ref{lem:HS}. 
\end{proof}

We require two preliminary results for the proof of Theorem~\ref{thm:outer}. The first is the following direct consequence of \eqref{eq:badapp}.
\begin{lemma}\label{lem:innerout} 
	If $\varphi$ is saturated, then so is $I \varphi$ for any inner function $I$. 
\end{lemma}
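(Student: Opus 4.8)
The plan is to argue directly from the characterization \eqref{eq:badapp} of saturation, using that multiplication by $\overline{I}$ is an isometry on $L^\infty$. Fix an inner function $I$. Since $|I|=1$ almost everywhere on $\mathbb{T}$ we have $\|I\varphi\|_\infty = \|\varphi\|_\infty$, so it suffices to show that $\inf_{g \in H^\infty_0}\|I\varphi - \overline{g}\|_\infty = \|\varphi\|_\infty$, which by \eqref{eq:badapp} is exactly the assertion that $I\varphi$ is saturated.

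The inequality ``$\leq$'' is immediate by taking $g=0$. For ``$\geq$'', I would fix an arbitrary $g$ in $H^\infty_0$ and multiply inside the norm by $\overline{I}$, which has modulus $1$ almost everywhere, to get $\|I\varphi - \overline{g}\|_\infty = \|\varphi - \overline{Ig}\|_\infty$. The crucial observation is that $Ig$ again lies in $H^\infty_0$: it is a product of two $H^\infty$ functions and it vanishes at the origin because $g$ does. Hence $\|\varphi - \overline{Ig}\|_\infty \geq \inf_{f \in H^\infty_0}\|\varphi - \overline{f}\|_\infty = \|\varphi\|_\infty$ since $\varphi$ is saturated, and taking the infimum over $g$ in $H^\infty_0$ gives the required lower bound.

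There is essentially no serious obstacle; the only point that needs care is that $g \mapsto Ig$ maps $H^\infty_0$ into $H^\infty_0$, which genuinely uses $g(0)=0$ rather than merely $g \in H^\infty$, and is precisely what makes the argument close. (Note that the reverse statement --- that $I\varphi$ saturated forces $\varphi$ saturated --- does \emph{not} follow in this way, since dividing by $I$ need not preserve $H^\infty$, so I would not attempt to prove more than the stated implication.)
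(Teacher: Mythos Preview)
Your proof is correct and is essentially the same as the paper's: both hinge on the observation that $\|I\varphi-\overline{g}\|_\infty=\|\varphi-\overline{Ig}\|_\infty$ and that $Ig\in H^\infty_0$ whenever $g\in H^\infty_0$. The paper phrases this as a contrapositive (if $I\varphi$ is not saturated then neither is $\varphi$), while you argue directly, but the content is identical.
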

\begin{proof}
	Suppose that $I\varphi$ is not saturated. Then there is a function $f$ in $H^\infty_0$ such that $\|I\varphi - \overline{f}\|_\infty < \|I\varphi\|_\infty$. However, this is equivalent to $\|\varphi- \overline{If}\|_\infty < \|\varphi\|_\infty$, so $\varphi$ is not saturated. 
\end{proof}

The second preliminary result follows from Muckenhoupt's theorem \cite{M72}.
\begin{lemma}\label{lem:muckenhoupt} 
	Let $w$ be a Helson--Szeg\H{o} weight. Then there exists a positive constant $C$ and $1<p<2$ such that
	\[\int_\Gamma w \leq C \frac{|\Gamma|^p}{|E|^p} \int_E w,\]
	for every arc $\Gamma$ and every measurable set $E\subseteq \Gamma$. 
\end{lemma}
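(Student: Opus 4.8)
The plan is to deduce Lemma~\ref{lem:muckenhoupt} from the classical characterization of the Muckenhoupt $A_p$ classes together with the Helson--Szeg\H{o} theorem's identification of $(A_2)$ weights with Helson--Szeg\H{o} weights. First I would recall that $w$ is a Helson--Szeg\H{o} weight if and only if $w$ satisfies the $(A_2)$ condition of Muckenhoupt on $\mathbb{T}$; this is Muckenhoupt's theorem \cite{M72} as it relates to the Helson--Szeg\H{o} boundedness criterion for the conjugation operator. Next, I would invoke the openness (self-improvement) property of the $A_p$ classes: a weight in $(A_2)$ lies in $(A_q)$ for some $q<2$. Thus there is $1<p<2$ with $w \in (A_p)$, meaning
\[
\sup_\Gamma \left(\frac{1}{|\Gamma|}\int_\Gamma w\right)\left(\frac{1}{|\Gamma|}\int_\Gamma w^{-1/(p-1)}\right)^{p-1} \eqqcolon [w]_{A_p} < \infty,
\]
where the supremum is over all arcs $\Gamma$ on $\mathbb{T}$.

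From the $A_p$ condition the stated inequality is a standard computation, which I would carry out as follows. Fix an arc $\Gamma$ and a measurable subset $E \subseteq \Gamma$. By H\"older's inequality with exponents $p$ and $p' = p/(p-1)$,
\[
|E| = \int_E w^{1/p} w^{-1/p} \leq \left(\int_E w\right)^{1/p} \left(\int_E w^{-1/(p-1)}\right)^{1/p'} \leq \left(\int_E w\right)^{1/p} \left(\int_\Gamma w^{-1/(p-1)}\right)^{1/p'}.
\]
Rearranging gives
\[
\int_E w \geq \frac{|E|^p}{\left(\int_\Gamma w^{-1/(p-1)}\right)^{p-1}},
\]
and the $A_p$ bound, applied to $\Gamma$, yields $\left(\int_\Gamma w^{-1/(p-1)}\right)^{p-1} \leq [w]_{A_p}\,|\Gamma|^{p}\left(\int_\Gamma w\right)^{-1}$. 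Substituting and setting $C \coloneqq [w]_{A_p}$ gives
\[
\int_E w \geq \frac{|E|^p}{C\,|\Gamma|^p}\int_\Gamma w,
\]
which is exactly the asserted inequality $\int_\Gamma w \leq C\,|\Gamma|^p|E|^{-p}\int_E w$.

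The only genuinely nontrivial input is the self-improvement of $A_p$: that $w \in A_2$ forces $w \in A_p$ for some $p < 2$. This is the reverse H\"older inequality for Muckenhoupt weights, and it is precisely the point I would cite from \cite{M72} (or from the standard literature on $A_p$ weights on $\mathbb{T}$). Everything else --- the translation between Helson--Szeg\H{o} weights and $A_2$, and the two applications of H\"older's inequality --- is routine. I therefore expect no real obstacle; the lemma is essentially a packaging of well-known weight theory into the form needed for the proof of Theorem~\ref{thm:outer}.
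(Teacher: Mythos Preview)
Your proof is correct and shares with the paper the two essential ingredients: the identification of Helson--Szeg\H{o} weights with $(A_2)$ weights, and the self-improvement that places $w$ in $(A_p)$ for some $1<p<2$. The only difference lies in how the inequality is then extracted. The paper invokes Muckenhoupt's theorem itself---the $L^p(w)$-boundedness of the Hardy--Littlewood maximal operator---applied to $\chi_E$: since $M\chi_E \geq |E|/|\Gamma|$ on $\Gamma$, one gets $\int_\Gamma w \leq (|\Gamma|/|E|)^p \|M\chi_E\|_{L^p(w)}^p \leq C(|\Gamma|/|E|)^p \int_E w$. You instead work directly from the $(A_p)$ condition via H\"older's inequality, which is a touch more elementary since it bypasses the maximal function theorem and uses only the defining inequality of $(A_p)$. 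Both routes are standard; yours is marginally more self-contained, while the paper's is a one-line citation.
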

\begin{proof}
	Since $w$ is a Helson--Szeg\H{o} weight, or equivalently an $(A_2)$ weight, we know that $w$ satisfies the $(A_p)$ condition for some $1<p<2$ (see e.g.~\cite[Cor.~VI.6.10 (b)]{Garnett07}). The stated result then follows by an application of Muckenhoupt's theorem (see \cite{M72} or e.g.~\cite[Thm.~VI.6.1]{Garnett07}) to the function $\chi_E$. 
\end{proof}

We will appeal to Lemma~\ref{lem:muckenhoupt} several times in the proof of Theorem~\ref{thm:outer}. However, it is only in its final application that it matters that we use the $(A_p)$ condition for some $1<p<2$ and not simply the $(A_2)$ condition. In the proof of Theorem~\ref{thm:outer} we will locally adopt the notation $f \ll g$, which means that there is a positive constant $B$ such that $|f(x)| \leq B g(x)$ for all relevant $x$. The constant $B$ will be allowed to depend on $\varphi$.
\begin{proof}
	[Proof of Theorem~\ref{thm:outer}] By Lemma~\ref{lem:innerout}, it is sufficient to consider the case that $\varphi$ is outer. We argue by contradiction and assume that there is some $f$ in $H^\infty_0$ and some $0<\varepsilon<1$ such that
	\[\|\varphi-\overline{f}\|_\infty = \varepsilon \|\varphi\|_\infty.\]
	As in the proof of Theorem~\ref{thm:badapp}, it follows that
	\[|f| \geq (1-\varepsilon)\|\varphi\|_\infty \qquad \text{and} \qquad |\Arg(\varphi f)| \leq \arccos\left(\frac{1-\varepsilon}{1+\varepsilon}\right)<\frac{\pi}{2}\]
	hold almost everywhere on $L_\varphi^+(1)$. Moreover, if
	\[\nu \coloneqq
	\begin{cases}
		-\Arg(\varphi f), & \text{on } L_\varphi^+(1); \\
		0, & \text{on } L_\varphi^-(1), 
	\end{cases}\]
	then $g \coloneqq \exp(-\widetilde{\nu}+i\nu)$ is in $H^1$. Clearly, $|g|$ is a Helson--Szeg\H{o} weight by \eqref{eq:HS}, and the $H^1$ function $h \coloneqq \varphi f g$ is positive on $L_\varphi^+(1)$. Since $f$ is in $H^\infty_0$, we know that $h(0)=0$. We may therefore write $h = u+iv$ with $u = -\tilde{v}$. 
	
	We next set
	\[E_n := \Gamma(\vartheta_n) \cap L_\varphi^{-1}(1) \qquad \text{and} \qquad F_n \coloneqq\Gamma(\vartheta_n) \cap L_\varphi^+(1)\]
	and claim that 
	\begin{equation}\label{eq:EnFn} 
		\int_{E_n} |u| \ll \esssup_{E_n} |\varphi| \int_{F_n} |u|. 
	\end{equation}
	To verify this, we first use the trivial estimate $\|f\|_\infty \leq (1+\varepsilon) \|\varphi\|_\infty \leq 2 \|\varphi\|_\infty$ to see that 
	\begin{equation}\label{eq:ug} 
		|u| \leq |h| = |\varphi f g| \leq \left(\esssup_{E_n}{|\varphi|}\right) 2 \|\varphi\|_\infty |g| 
	\end{equation}
	on $E_n$. We next use that $E_n$ is a subset of $\Gamma(\vartheta_n)$ and then apply Lemma~\ref{lem:muckenhoupt} with respect to the subset $F_n$ of $\Gamma(\vartheta_n)$ to infer that 
	\begin{equation}\label{eq:Eng} 
		\int_{E_n} |g| \leq \int_{\Gamma(\vartheta_n)} |g| \ll \int_{F_n} |g| \leq \frac{1}{(1-\varepsilon)\|\varphi\|_\infty^2} \int_{F_n} |u|. 
	\end{equation}
	Here we used the assumption that $|F_n| \geq b |\Gamma(\vartheta_n)|$ from \eqref{eq:asymp} and, in the final estimate, that $|h| \geq (1-\varepsilon)\|\varphi\|_\infty^2 |g|$ almost everywhere on $L_\varphi^+(1)$ and that $h=u$ on $L_\varphi^+(1)$. Now \eqref{eq:EnFn} follows from \eqref{eq:ug} and \eqref{eq:Eng}.
	
	We next claim that if $n$ is sufficiently large, then 
	\begin{equation}\label{eq:FnEn} 
		\int_{F_n} |u| \ll \int_{E_n} |u|. 
	\end{equation}
	If we can establish \eqref{eq:FnEn}, then we would be done since \eqref{eq:EnFn} and \eqref{eq:FnEn} together contradict the assumption \eqref{eq:limit}. (Note that the left-hand side of \eqref{eq:FnEn} is nonzero for each fixed $n\geq1$ since $u$ is positive on $F_n$ and $|F_n|>0$.)
	
	To prove \eqref{eq:FnEn}, we start by writing $u = u_0 + u_1$, where $u_0$ is the conjugate function of $-v \chi_{\Gamma(2\vartheta_n)}$. By the weak-type $L^1$ bound for the conjugate function, we find for $\lambda>0$ that 
	\begin{equation}\label{eq:weakL1} 
		|\{|u_0|> \lambda \}| \ll \frac{1}{\lambda} \|v \chi_{\Gamma(2\vartheta_n)}\|_1 \leq \frac{1}{\lambda} \|h \chi_{\Gamma(2\vartheta_n) \cap L_\varphi^{-1}(1)}\|_1, 
	\end{equation}
	where we first used that $v\equiv 0$ on $L_\varphi^+(1)$ and then that $|v| \leq |h|$. By the same argument as used to establish \eqref{eq:EnFn} above, we obtain next that 
	\begin{equation}\label{eq:weakL13} 
		\|h \chi_{\Gamma(2\vartheta_n) \cap L_\varphi^{-1}(1)}\|_1 \ll \left(\esssup_{\Gamma(2\vartheta_n) \cap L_\varphi^-(1)}{|\varphi|}\right) \int_{F_n} |u|. 
	\end{equation}
	By \eqref{eq:limit}, we can choose $n$ so large that the essential supremum in \eqref{eq:weakL13} is as small as we wish. In particular, we will let $n$ be so large that \eqref{eq:weakL1} and \eqref{eq:weakL13} together give 
	\begin{equation}\label{eq:weakL12} 
		|\{|u_0|> \lambda \}| \leq \frac{\min(a,b)}{2\pi \lambda}\int_{F_n} |u|. 
	\end{equation}
	We set $\lambda_n \coloneqq c \vartheta_n^{-1} \int_{F_n} |u|$ for a constant $c>0$ to be chosen later and apply \eqref{eq:weakL12} in combination with the estimate $|F_n| \geq b \vartheta_n/\pi$ from \eqref{eq:asymp} to infer that 
	\begin{equation}\label{eq:Fncapu0} 
		|F_n \cap \{|u_0| \leq \lambda_n \}| \geq b \vartheta_n/(2\pi). 
	\end{equation}
	Using that $|u| \geq (1-\varepsilon) \|\varphi\|_\infty^2 |g|$ almost everywhere on $L_\varphi^+(1)$, then Lemma~\ref{lem:muckenhoupt} and \eqref{eq:Fncapu0}, and finally that $\|u\|_\infty \leq 2\|\varphi\|_\infty^2 |g|$, we obtain 
	\begin{equation}\label{eq:u0small} 
		\int_{F_n \cap \{|u_0| \leq \lambda_n\}} |u| \gg \int_{\Gamma(\vartheta_n)} |g| \geq \int_{F_n} |g| \gg \int_{F_n} |u|. 
	\end{equation}
	Let $C$ denote the total implied constant in the chain of inequalities \eqref{eq:u0small}, which we stress does not depend on our upcoming choice of $c$. By the estimate $u_1 \geq u-|u_0|$, the fact that $u=|u|$ on $L_\varphi^+(1)$, and \eqref{eq:u0small}, we find that
	\[\int_{F_n \cap \{|u_0| \leq \lambda_n\}} u_1 \geq \int_{F_n \cap \{|u_0|\leq \lambda_n\}} \big(|u|-\lambda_n\big) \geq \left(\frac{1}{C}-c\frac{|F_n \cap \{|u_0|\leq \lambda_n\}|}{\vartheta_n}\right)\int_{F_n} |u|.\]
	Since $|F_n| \leq |\Gamma(\vartheta_n)| = \vartheta_n/\pi$, we can choose $c \leq \pi / (2C)$ to ensure that 
	\begin{equation}\label{eq:u1u} 
		\int_{F_n \cap \{|u_0| \leq \lambda_n\}} u_1 \geq\frac{1}{2C} \int_{F_n} |u| 
	\end{equation}
	for all sufficiently large $n$. The next step is to estimate the difference $u_1(e^{it})-u_1(1)$ for $|t|\leq \vartheta_n$. Setting $I_n \coloneqq \{2 \vartheta_n < |\theta| \leq \pi \}$, we have
	\[u_1(e^{i\theta}) = - \pv \int_{I_n} \cot\left(\frac{t-\theta}{2}\right)\, v(e^{i\theta})\,\frac{d\theta}{2\pi}.\]
	We decompose $I_n$ into the dyadic intervals $I_{n,k} \coloneqq\{2^k \vartheta_n \leq |\theta| \leq 2^{k+1}\vartheta_n\}$ for $k=1,2,\ldots,K$ and note that if $|t| \leq \vartheta_n$ and $\theta$ is in $I_{n,k}$, then
	\[\left|\cot\left(\frac{t-\theta}{2}\right)-\cot\left(\frac{-\theta}{2}\right)\right| \ll \frac{1}{\vartheta_n 2^k}.\]
	Consequently,
	\[|u_1(e^{it})-u_1(1)| \ll \sum_{k=1}^{K} \frac{1}{\vartheta_n 2^k} \int_{\Gamma(2^{k+1}\vartheta_n)} |v|.\]
	To estimate this integral, we use first that $v$ is supported on $L_\varphi^-(0)$ and then that $|v| \leq |h|$ to see that
	\[\int_{\Gamma(2^{k+1}\vartheta_n)} |v| \leq 2\|\varphi\|_\infty \left(\esssup_{\Gamma(2^{k+1}\vartheta_n) \cap L_\varphi^-(1)}{|\varphi|}\right) \int_{\Gamma(2^{k+1}\vartheta_n)} |g|.\]
	We appeal to Lemma~\ref{lem:muckenhoupt} and obtain that
	\[\int_{\Gamma(2^{k+1}\vartheta_n)} |g| \ll \left(\frac{|\Gamma(2^{k+1}\vartheta_n)|}{|F_n|}\right)^p \int_{F_n} |g| \leq \left(\frac{2^{k+1}}{b}\right)^p \frac{1}{(1-\varepsilon)\|\varphi\|_\infty^2} \int_{F_n} |u|.\]
	Combining these estimates, we find that
	\[|u_1(e^{it})-u_1(1)| \ll \left(\frac{1}{\vartheta_n} \int_{F_n} |u|\right) \sum_{k=1}^K 2^{k(p-2)} \esssup_{\Gamma(2^{k+1}\vartheta_n) \cap L_\varphi^-(1)}{|\varphi|}.\]
	The sum goes to $0$ as $n\to \infty$, as can be seen by using \eqref{eq:limit} for, say, $2^{k+1} \vartheta_n \leq \sqrt{\vartheta_n}$. We can therefore assume that
	\[|u_1(e^{it})-u_1(1)| \leq \frac{\pi}{8C \vartheta_n}\int_{F_n} |u|\]
	for $|t|\leq \vartheta_n$ and all sufficiently large $n$, where $C$ is the constant in \eqref{eq:u1u}. Inserting this estimate in \eqref{eq:u1u} and using that $|F_n| \leq \vartheta_n\pi$, we find that
	\[u_1(1) \geq \frac{3 \pi }{8 C \vartheta_n} \int_{F_n} |u| \qquad \text{and} \qquad u_1(e^{it}) \geq \frac{\pi }{4 C \vartheta_n} \int_{F_n} |u|\]
	for $|t|\leq \vartheta_n$ and all sufficiently large $n$. We finally choose $c = \pi/(8 C)$ in the definition of $\lambda_n$ to ensure that
	\[\int_{E_n} |u|\geq \int_{E_n \cap \{|u_0|\leq \lambda_n\}} |u| \geq \int_{E_n \cap \{|u_0|\leq \lambda_n\}} \frac{u_1}{2} \geq \frac{a}{16C} \int_{F_n} |u|.\]
	In the final estimate we also used that $|E_n \cap \{|u_0|\leq \lambda_n\}| \geq a \vartheta_n/(2\pi)$ which follows from \eqref{eq:asymp} and \eqref{eq:weakL12}, similarly to how we established \eqref{eq:Fncapu0} above. This completes the proof of \eqref{eq:FnEn}. 
\end{proof}

\section{Concluding remarks} Falling short of giving a full description of the saturated functions in $H^\infty$, we would like to conclude the paper by proposing two problems that may be more manageable.
\begin{problem}
	Describe the outer functions in $H^{\infty}$ that are saturated. 
\end{problem}
By Theorem~\ref{thm:badapp}, an outer function $\varphi$ fails to be saturated if there is a $0<\delta<1$ such that $|\varphi|$ is an apical Helson--Szeg\H{o} weight at level $\delta$ and the interior of $L_{\varphi}^-(\delta)$ is nonempty. It is natural to ask which, if any, of these two conditions may also be necessary for $\varphi$ not to be saturated.

At any rate, it would be desirable to get a better understanding of apical Helson--Szeg\H{o} weights. We will motivate our second problem, which deals with such weights, by presenting the example alluded to below Theorem~\ref{thm:badapp}. To this end, let $U$ be an open subset of $\mathbb{T}$ with $0<|U|<1$. Then $U$ is a finite or countable union of pairwise disjoint open arcs
\[U = \bigcup_{n\geq1} \Gamma_n.\]
For $0<\varepsilon\leq1$, we let $\varepsilon \Gamma_n$ denote the open arc concentric to $\Gamma_n$ with $|\varepsilon \Gamma_n| = \varepsilon |\Gamma_n|$. We will say that $U$ is \emph{thin} if 
\begin{equation}\label{eq:thin} 
	\sup_{e^{i\theta}\in \mathbb{T}\setminus U} \sum_{n\geq1} \frac{|\Gamma_n|}{\dist\big(e^{i\theta},\frac{1}{2}\Gamma_n\big)} < \infty. 
\end{equation}
The set $\bigcup_{n\geq1} \left\{e^{i\theta}\,:\, 2^{-n} < \theta < 2^{-n}+a b^n \right\}$ for $0<a\leq1$ and $0<b\leq 1/2$ is for example thin if and only if $b<1/2$.

Let $e^{i\theta_n}$ be the midpoint of $\Gamma_n$. For a positive number $\kappa$, we define the weight
\[w_\kappa(e^{i\theta}) \coloneqq 
\begin{cases}
	\left(\frac{|\theta-\theta_n|}{\pi |\Gamma_n|}\right)^\kappa, & e^{i\theta} \in \Gamma_n; \\
	1, & e^{i\theta} \in \mathbb{T}\setminus U. 
\end{cases}\]
We make two observations. First, in the most interesting case, when there are infinitely many arcs $J_n$, we have
\[\dist\big(L_w^+(\delta),L_w^-(\varepsilon)\big)=0\]
for all $0<\delta<\varepsilon<1$, so Lemma~\ref{lem:HS} is of no help. Second, it is clear that $w_\kappa$ is a Helson--Szeg\H{o} weight if and only if $0<\kappa<1$. In contrast, however, we will now prove the following.
\begin{claim}
	If $U$ is thin, then $w_\kappa$ is an apical Helson--Szeg\H{o} weight at level $1/2$ for every $\kappa>0$. 
\end{claim}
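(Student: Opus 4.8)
The plan is to run, pointwise, the construction from the proof of Lemma~\ref{lem:HS}, using the thinness of $U$ as a substitute for the positive distance that is unavailable here. Normalize so that $\|w_\kappa\|_\infty=1$; then $L_{w_\kappa}^+(1/2)=(\mathbb{T}\setminus U)\cup\bigcup_n\big(\Gamma_n\setminus c_\kappa\Gamma_n\big)$ with $c_\kappa=2^{-1/\kappa}\in(0,1)$, since on $\Gamma_n$ one has $w_\kappa(e^{i\theta})\ge1/2$ exactly when $|\theta-\theta_n|\ge\pi c_\kappa|\Gamma_n|$. The substitution $\theta\mapsto(\theta-\theta_n)/(\pi|\Gamma_n|)$ on each $\Gamma_n$ shows that $\log w_\kappa\in L^1$ with $\int_{\Gamma_n}|\log w_\kappa|\,d\theta/(2\pi)=\kappa|\Gamma_n|$, and more precisely $\int_{e^{-R/\kappa}\Gamma_n}|\log w_\kappa|\,d\theta/(2\pi)=|\Gamma_n|\,e^{-R/\kappa}(\kappa+R)$ for every $R>0$. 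Following the proof of Lemma~\ref{lem:HS}, I set $u_R=\max(\log w_\kappa,-R)\in L^\infty$ and $v_R=R+\min(\log w_\kappa,-R)\in L^1$, so that $w_\kappa=\exp(u_R+v_R)$ and $v_R$ is supported on the arcs $e^{-R/\kappa}\Gamma_n$ with $|v_R|\le|\log w_\kappa|$ there. It then suffices to show that $\esssup_{L_{w_\kappa}^+(1/2)}|\tilde v_R|<\pi/2$ for $R$ large enough.

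Fix $e^{it}\in L_{w_\kappa}^+(1/2)$ and take $R$ so large that $e^{-R/\kappa}<c_\kappa/2$, so that $e^{-R/\kappa}\Gamma_n\subseteq\tfrac12\Gamma_n$ for all $n$. Writing $\tilde v_R(e^{it})=\pv\int\cot\!\big(\tfrac{t-\theta}{2}\big)v_R(e^{i\theta})\,d\theta/(2\pi)$ and splitting over the arcs $e^{-R/\kappa}\Gamma_n$, the estimate $|\cot(x/2)|\ll|x|^{-1}$ for $0<|x|\le\pi$ together with the integral bound above shows that the $n$th term is $\ll e^{-R/\kappa}(\kappa+R)\,|\Gamma_n|/\dist\big(e^{it},e^{-R/\kappa}\Gamma_n\big)$. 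If $e^{it}$ lies in some component $\Gamma_m$, then $w_\kappa(e^{it})\ge1/2$ forces $|t-\theta_m|\ge\pi c_\kappa|\Gamma_m|$, whence $\dist(e^{it},e^{-R/\kappa}\Gamma_m)\ge\pi(c_\kappa-e^{-R/\kappa})|\Gamma_m|\gg|\Gamma_m|$ and the $m$th term is $\ll e^{-R/\kappa}(\kappa+R)$. For the remaining terms I claim that, with $e^{it_0}$ equal to $e^{it}$ when $e^{it}\notin U$ and to the endpoint of $\Gamma_m$ nearest $e^{it}$ otherwise,
\[\dist\big(e^{it},e^{-R/\kappa}\Gamma_n\big)\ \ge\ \tfrac12\,\dist\big(e^{it_0},\tfrac12\Gamma_n\big)\qquad\text{for all }\Gamma_n\neq\Gamma_m.\]
Granting this and summing over $n$ yields $|\tilde v_R(e^{it})|\ll e^{-R/\kappa}(\kappa+R)\big(1+\sum_n|\Gamma_n|/\dist(e^{it_0},\tfrac12\Gamma_n)\big)$, where the $1$ absorbs the possible $m$th term; since $e^{it_0}\in\mathbb{T}\setminus U$, the sum is bounded by the finite supremum in \eqref{eq:thin}, independently of $e^{it}$. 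As $e^{-R/\kappa}(\kappa+R)\to0$ when $R\to\infty$, the right-hand side is $<\pi/2$ once $R$ is large, and the claim follows.

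It remains to prove the displayed distance comparison, which — using $e^{-R/\kappa}\Gamma_n\subseteq\tfrac12\Gamma_n$ — reduces to $\dist(e^{it},\tfrac12\Gamma_n)\ge\tfrac12\dist(e^{it_0},\tfrac12\Gamma_n)$ for $\Gamma_n\ne\Gamma_m$; this is trivial when $e^{it}=e^{it_0}$, so assume $e^{it}\in\Gamma_m$. Write $\rho_m=\pi(1-c_\kappa)|\Gamma_m|$, so that $\dist(e^{it},e^{it_0})\le\rho_m$. If $\dist(e^{it_0},\tfrac12\Gamma_n)\ge2\rho_m$, the inequality follows from $\big|\dist(e^{it},\,\cdot\,)-\dist(e^{it_0},\,\cdot\,)\big|\le\dist(e^{it},e^{it_0})$. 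If instead $\dist(e^{it_0},\tfrac12\Gamma_n)<2\rho_m$, then $\tfrac12\Gamma_n$ lies close to the endpoint $e^{it_0}$ on the side away from $\Gamma_m$, so the geodesic from $e^{it}\in\Gamma_m$ to $\tfrac12\Gamma_n$ passes through $e^{it_0}$, giving $\dist(e^{it},\tfrac12\Gamma_n)=\dist(e^{it},e^{it_0})+\dist(e^{it_0},\tfrac12\Gamma_n)\ge\dist(e^{it_0},\tfrac12\Gamma_n)$. This last step uses that $\Gamma_m$ is not too large an arc; but there are only finitely many components with, say, $|\Gamma_n|>1/8$, and their contribution to $\tilde v_R(e^{it})$ may be bounded separately via the trivial estimate $\dist(e^{it},e^{-R/\kappa}\Gamma_n)\ge\tfrac14|\Gamma_n|$, which already gives $\ll e^{-R/\kappa}(\kappa+R)$ per such component, so we may assume the components at hand are small. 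The main obstacle is precisely this transfer of \eqref{eq:thin} — which controls the relevant sum only at points of $\mathbb{T}\setminus U$ — to the interior points of $U$; everything else is the bookkeeping from the proof of Lemma~\ref{lem:HS}, carried out pointwise rather than through $\|v_R\|_1$.
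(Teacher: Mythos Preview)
Your argument follows the paper's line: decompose $\log w_\kappa=u+v$ with $u\in L^\infty$ and $v$ supported on the small central sub-arcs $e^{-R/\kappa}\Gamma_n$, then bound $|\tilde v|$ on $L_{w_\kappa}^+(1/2)$ via thinness. Your splitting $(u_R,v_R)$ is a harmless variant of the paper's $(u_\varepsilon,v_\varepsilon)$ with $\varepsilon=e^{-R/\kappa}$ (you set $u=-R$ on the small arcs where the paper sets $u=0$), and the integral estimates are correct.

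You also fill in a point the paper passes over in one sentence: \eqref{eq:thin} is stated only for $e^{i\theta}\in\mathbb{T}\setminus U$, yet the bound on $|\tilde v|$ is needed on all of $L_{w_\kappa}^+(1/2)$, which includes the outer portions $\Gamma_m\setminus c_\kappa\Gamma_m$. Your distance comparison handles this in substance, but the case split is more tangled than necessary, and the closing remark about large arcs addresses large $\Gamma_n$ whereas the geometric concern in subcase~2b is actually large $\Gamma_m$. A cleaner route avoids both the case split and the large-arc issue: for $e^{it}\in\Gamma_m$ and $n\neq m$, the shorter arc from $e^{it}$ to any point of $\tfrac12\Gamma_n\subset\mathbb{T}\setminus\Gamma_m$ must contain one of the two endpoints $e^{it_0},e^{it_1}$ of $\Gamma_m$, whence $\dist(e^{it},\tfrac12\Gamma_n)\ge\min_j\dist(e^{it_j},\tfrac12\Gamma_n)$. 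Since both endpoints lie in $\mathbb{T}\setminus U$, summing over $n\neq m$ and invoking \eqref{eq:thin} at each endpoint gives $\sum_{n\neq m}|\Gamma_n|/\dist(e^{it},\tfrac12\Gamma_n)\le 2S$, with $S$ the supremum in \eqref{eq:thin}, and no smallness assumption on $\Gamma_m$ is needed.
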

\begin{proof}[Proof of Claim.] 
	There is a number $\varepsilon_\kappa>0$ such that if $0<\varepsilon\leq\varepsilon_\kappa$, then $\varepsilon \Gamma_n$ is a subset of $L_w^-(1/4)$ for all $n\geq1$. Set $U_\varepsilon \coloneqq \bigcup_{n\geq1} \varepsilon \Gamma_n$,
	\[u_\varepsilon \coloneqq 
	\begin{cases}
		0, & \text{on }\, U_\varepsilon; \\
		\log{w}, & \text{on }\, \mathbb{T}\setminus U_\varepsilon, 
	\end{cases}\]
	and $v_\varepsilon \coloneqq \log{w}-u_\varepsilon$. A computation shows that
	\[\int_{\varepsilon \Gamma_n} |v_{\varepsilon}| \leq C \kappa \varepsilon |\log{\varepsilon}| |\Gamma_n|,\]
	with $C$ independent of $\kappa$ and $\varepsilon$. By the definition of $\tilde{v}_{\varepsilon}$ and the assumption that \eqref{eq:thin} holds, we therefore find that there is a constant $C$ independent of $\kappa$ and $\varepsilon$ such that
	\[|\tilde{v}_{\varepsilon}| \leq C \kappa \varepsilon |\log{\varepsilon}|\]
	on $J_w^-(1/2)$. Hence $w=\exp(u_{\varepsilon}+v_{\varepsilon})$ is the required representation of $w$ for all $\varepsilon$ sufficiently small. 
\end{proof}

The above proof along with that of Lemma~\ref{lem:HS} suggests the following problem.
\begin{problem}
	Is it true that $w$ is an apical Helson--Szeg\H{o} weight if and only if $w^\kappa$ is an apical Helson--Szeg\H{o} weight for all positive numbers $\kappa$? 
\end{problem}
In other words, we ask if \eqref{eq:aHSsup} could be replaced simply by 
\begin{equation}\label{eq:simpleHS} 
	\esssup_{L_w^+(\delta)} |\tilde{v}|<\infty. 
\end{equation}
This would, in particular, imply that $w$ is an apical Helson--Szeg\H{o} weight whenever $\log{w}$ has bounded mean oscillation.

One could object that we may construct more sophisticated examples for which it would be less straightforward to find $v$ such that the left-hand side of \eqref{eq:simpleHS} is arbitrarily small. However, one should also take into account that our construction of $v$ is rather simple-minded in either of the cases treated above. 

\bibliographystyle{amsplain} 
\bibliography{hankelmax}

\end{document}